\newtheorem{thm}{Theorem}[section]
\newtheorem{lem}[thm]{Lemma}
\newtheorem{prop}[thm]{Proposition}
\newtheorem{cor}[thm]{Corollary}
\newtheorem{defi}{Definition}
\newtheorem{rem}{Remark}
\numberwithin{equation}{section}
\newcommand{\Hom}{\mathrm{Hom}}
\newcommand{\coker}{\mathrm{Coker}}
\newcommand{\leib}{\mathrm{Leib}}
\newcommand{\lie}{\mathrm{Lie}}
\newcommand{\rel}{\mathrm{rel}}
\newcommand{\N}{\mathbb{N}}
\newcommand{\F}{\mathbb{F}}
\newcommand{\lf}{\mathfrak{L}}
\newcommand{\qf}{\mathfrak{q}}
\newcommand{\gf}{\mathfrak{g}}
\newcommand{\hf}{\mathfrak{h}}
\begin{document}


\title{Spectral sequences for commutative Lie algebras}

\author{Friedrich Wagemann}
\address{Laboratoire de math\'ematiques Jean Leray, UMR 6629 du CNRS, Universit\'e
de Nantes, 2, rue de la Houssini\`ere, F-44322 Nantes Cedex 3, France}
\email{wagemann@math.univ-nantes.fr}

\subjclass[2010]{Primary 17A32; Secondary 17B56}

\keywords{Leibniz cohomology, Chevalley-Eilenberg cohomology, spectral sequence,
commutative Lie algebra, commutative cohomology}


\begin{abstract}
We construct some spectral sequences as tools for computing commutative cohomology of commutative Lie algebras in characteristic $2$. In a first part, we focus on a Hochschild-Serre-type spectral sequence, while in a second part we obtain comparison spectral sequences which mediate between Chevalley-Eilenberg-, commutative- and Leibniz cohomology. These methods are illustrated by a few computations. 
\end{abstract}


\date{August 17, 2019}
          
\maketitle


\section*{Introduction}

The classification of finite-dimensional simple Lie algebras over a field $\F$ of characteristic $2$ is still widely open. It is known from the classification of Lie algebras over a field of characteristic zero that cohomological methods may be powerful allies in the classification quest. In \cite{LZ}, there has been recently introduced a new cohomology theory for Lie algebras in characteristic $2$, called {\it commutative cohomology} \cite{LZ}. It is more generally defined on so-called {\it commutative Lie algebras} in characteristic $2$, i.e. vector spaces $\gf$ over $\F$ with a bracket which satisfies the usual Jacobi identity and on top of that $[x,y]=[y,x]$ for all $x,y\in\gf$. The corresponding cohomology theory is then based on the {\it symmetric} tensor powers of $\gf$ with the usual Chevalley-Eilenberg differential.

It has been illustrated in \cite{LZ} that commutative cocycles and commutative cohomology arise in the classification of finite-dimensional simple Lie algebras in characteristic $2$, see \cite{LZ} and references therein. The authors of \cite{LZ} listed a whole catalogue of questions concerning commutative cohomology, and we will answer some of them in the present paper. We hope that the computational methods of the present paper will serve in future computations of commutative cohomology. 

We focus in the present paper on two cohomological tools for the computation of commutative cohomology. In Sections 1-3, we develop a Hochschild-Serre-type spectral sequence for commutative cohomology. The construction is very close to the original construction by Hochschild and Serre \cite{HS}. In Section 4, we perform some cohomology computations with the help of the spectral sequence, notably for the two dimensional commutative Lie algebra ${\mathfrak N}$ generated by $e$ and $f$ with the only relation $[f,f]=e$ and for the two dimensional Lie algebra ${\mathfrak a}$ generated by $e$ and $h$ with the relations 
$[h,e]=[e,h]=e$. On the other hand, we obtain a cohomology vanishing  theorem for any commutative Lie algebra $\gf$ with a $1$-dimensional ideal with values in the $1$-dimensional non-trivial $\gf$-module $\F_1$, see Theorem \ref{cohomology_vanishing}. 

The second part of the paper in Sections 5-6 concerns three comparison spectral sequences which mediate between Chevalley-Eilenberg- and Leibniz cohomology, between Chevalley-Eilenberg- and commutative cohomology, and between commu\-ta\-tive- and Leibniz cohomology, respectively. In order to illustrate the use of these comparison spectral sequences, we show for example in Theorem  \ref{vanishing_all_cohomologies} that for any Lie algebra $\gf$ with a $1$-dimensional ideal with values in the $1$-dimensional non-trivial $\gf$-module $\F_1$, the vanishing of commutative cohomology implies the vanishing of Leibniz- and Chevalley-Eilenberg cohomology as well.  

Note that there are more spectral sequences which should transpose easily to commutative Lie algebras, as for example the Feigin-Fuchs spectral sequence for $\N$-graded Lie algebras.          


\section{Commutative Lie algebras and their cohomology}

Let $\F$ be a field of characteristic $2$. 

\begin{defi}
A commutative Lie algebra over $\F$ is an $F$-vector space $\gf$ with a bilinear bracket 
$[,]:\gf\times\gf\to\gf$ such that for all $x,y,z\in\gf$
$$[x,y]=[y,x],\,\,\,{\rm and}\,\,\,[x,[y,z]]+[y,[z,x]]+[z,[x,y]]=0.$$
\end{defi}

\begin{rem}
In particular, a commutative Lie algebra is a left and right Leibniz algebra, i.e. a {\it symmetric Leibniz algebra}.  
\end{rem}

As usual, an $\F$-vector space $M$ together with an $\F$-bilinear map $\gf\times M\to M$,
denoted $(x,m)\mapsto x\cdot m$, is called a $\gf$-module in case for all $x,y\in \gf$ and all
$m\in M$, we have 
$$[x,y]\cdot m=x\cdot(y\cdot m)+y\cdot(x\cdot m).$$ 

Following \cite{LZ}, we next define a cochain complex for commutative Lie algebras which will have its applications in the study and classification of Lie algebras in characteristic $2$. 

\begin{defi}
Let $\gf$ be a commutative Lie algebra and $M$ be a $\gf$-module. We set 
$$CS^n(\gf,M):=\Hom_\F(S^n\gf,M),$$
where $S^n\gf$ denotes the usual symmetric algebra on the vector space $\gf$. 
The graded vector space $CS^\bullet(\gf,M)$ becomes a cochain complex with the usual Chevalley-Eilenberg differential
\begin{eqnarray}
d^nf(x_1,\dots,x_{n+1}) & := & \sum_{i=1}^{n+1}x_i\cdot f(x_1,
\dots,\hat{x}_i,\dots,x_{n+1})   \nonumber  \\
& + & \sum_{1\le i<j\le n+1}f([x_i,x_j],x_1,\dots,\hat{x}_i,\dots,\hat{x}_j,\dots,x_{n+1})
\label{CE_coboundary}
\end{eqnarray}
for any $f\in CS^n(\gf,M)$ and all elements $x_1,\dots,x_{n+1}\in\gf$. 
The corresponding cohomology is called commutative cohomology of commutative Lie algebras and denoted by $HS^\bullet(\gf,M)$.  
\end{defi}

\begin{rem}
Observe that this cohomology is different from the usual Lie algebra cohomology in characteristic $2$, where one takes cochain spaces $C^q(\gf,M)$ which vanish in case $q$ exceeds the dimension of $\gf$. 
\end{rem} 

We next show that we have the usual Cartan relation for the symmetric cohomology of commutative Lie algebras. 

\begin{prop}  \label{prop_Cartan_relation}
Let $\gf$ be a commutative Lie algebra over $\F$. Then we have for all $x\in\gf$
$$L_x=d\circ i_x + i_x\circ d,$$
where $L_x$ is the Lie derivative, i.e. for all $f\in CS^n(\gf,M)$ and all $x_1,\ldots,x_n\in\gf$, we have 
$$(L_xf)(x_1,\ldots,x_n):=x\cdot f(x_1,\ldots,x_n)+\sum_{i=1}^nf(x_1,\ldots,[x,x_i],\ldots,x_n),$$
and $i_x$ is the usual insertion operator, i.e. for all $f\in CS^n(\gf,M)$ and all $x_1,\ldots,x_{n-1}\in\gf$, we have 
$$(i_xf)(x_1,\ldots,x_{n-1}):=f(x,x_1,\ldots,x_{n-1}).$$
\end{prop}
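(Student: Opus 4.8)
The plan is to verify the Cartan relation $L_x = d\circ i_x + i_x\circ d$ directly by evaluating both sides on an arbitrary cochain $f\in CS^n(\gf,M)$ and arbitrary arguments $x_1,\dots,x_n\in\gf$, exactly as in the classical Chevalley-Eilenberg setting. First I would write out $(i_x\circ d^n f)(x_1,\dots,x_n) = (d^n f)(x,x_1,\dots,x_n)$ using formula \eqref{CE_coboundary} with the $n+1$ arguments $x,x_1,\dots,x_n$: this produces a term $x\cdot f(x_1,\dots,x_n)$, terms $x_i\cdot f(x,x_1,\dots,\hat{x}_i,\dots,x_n)$, a collection of terms $f([x,x_i],x_1,\dots,\hat{x}_i,\dots,x_n)$ coming from pairs involving the inserted $x$, and terms $f([x_i,x_j],x,x_1,\dots,\hat{x}_i,\dots,\hat{x}_j,\dots,x_n)$ coming from pairs not involving $x$. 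Separately I would write out $(d^{n-1}\circ i_x f)(x_1,\dots,x_n)$, where $i_x f\in CS^{n-1}(\gf,M)$ is the cochain $(y_1,\dots,y_{n-1})\mapsto f(x,y_1,\dots,y_{n-1})$; applying \eqref{CE_coboundary} at level $n-1$ gives terms $x_i\cdot f(x,x_1,\dots,\hat{x}_i,\dots,x_n)$ and terms $f(x,[x_i,x_j],x_1,\dots,\hat{x}_i,\dots,\hat{x}_j,\dots,x_n)$.

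Next I would add the two expressions. The terms $x_i\cdot f(x,\dots,\hat{x}_i,\dots)$ appear in both sums, so in characteristic $2$ they cancel. The double-sum terms over pairs $\{i,j\}$ with neither index equal to the slot of $x$ also appear in both expansions — one as $f([x_i,x_j],x,\dots)$ and the other as $f(x,[x_i,x_j],\dots)$, which are equal since $CS^n(\gf,M)=\Hom_\F(S^n\gf,M)$ is defined on the \emph{symmetric} power so the order of arguments is irrelevant — hence these cancel as well. What survives is $x\cdot f(x_1,\dots,x_n) + \sum_{i=1}^n f([x,x_i],x_1,\dots,\hat{x}_i,\dots,x_n)$, and using $[x,x_i]=[x_i,x]$ (or simply leaving it, since again symmetry of $f$ makes $f([x,x_i],x_1,\dots,\hat{x}_i,\dots,x_n) = f(x_1,\dots,[x,x_i],\dots,x_n)$ with $[x,x_i]$ reinserted in the $i$-th slot) this is exactly $(L_x f)(x_1,\dots,x_n)$ by the definition of the Lie derivative given in the statement.

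I should be slightly careful about two bookkeeping points. The first is sign/coefficient bookkeeping: in the classical characteristic-zero Chevalley-Eilenberg complex the Cartan formula involves signs $(-1)^{i}$, $(-1)^{i+j}$, and these are what make the $x_i\cdot f(x,\dots)$ terms cancel between the two halves; here every sign is $+1$ because the differential \eqref{CE_coboundary} is written without signs (appropriate in characteristic $2$), so cancellation happens instead via $1+1=0$ in $\F$. I would state this explicitly so the reader sees that the two mechanisms coincide. The second point is that because we work with the symmetric algebra $S^n\gf$ rather than the exterior algebra, there are no antisymmetry constraints to track and no factor-of-$2$ subtleties of the form "$x\wedge x=0$"; the argument is if anything cleaner than the classical one.

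The main obstacle — really the only one — is purely organizational: matching up the index sets in the two expansions correctly, in particular keeping straight that when $x$ is inserted in the first slot the pair $\{1, i{+}1\}$ of the level-$n$ differential corresponds to the "$x$ paired with $x_i$" family while the pairs $\{i{+}1, j{+}1\}$ correspond to the level-$(n-1)$ double sum. Once the correspondence is set up, every term either cancels in pairs or assembles into $L_x f$, and no genuine computation beyond relabeling indices is required. I would therefore present the proof as: expand $i_x d f$, expand $d i_x f$, add, cancel, and read off $L_x f$.
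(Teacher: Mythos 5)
Your proposal is correct and follows essentially the same route as the paper: the paper's proof also expands $df(x,x_1,\dots,x_n)$, identifies the terms where $x$ appears only as an inserted argument of $f$ with $d(i_xf)$, and identifies the remaining terms (where $x$ enters an action or a bracket) with $L_xf$, which in characteristic $2$ is exactly your add-and-cancel bookkeeping. Your version is merely more explicit about the index matching and about where the symmetry of $f$ on $S^n\gf$ is used.
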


\begin{proof}
Let $f\in CS^n(\gf,M)$ and $x,x_1,\ldots,x_n\in\gf$. When computing 
$$(i_x(df))(x_1,\ldots,x_n)=df(x,x_1,\ldots,x_n),$$
there are the terms involving $x$ in the bracket terms or in the action term of the coboundary operator, and there are the terms where $x$ is simply inserted as the first argument of $f$.
These last terms constitute exactly $d(i_xf)(x_1,\ldots,x_n)$, because in $d(i_xf)$, the element $x$ is never involved in the action or in the bracket. The remaining terms constitute 
$(L_xf)(x_1,\ldots,x_n)$. 
\end{proof}


\section{The spectral sequence associated to a subalgebra}

In this section, we construct a Hochschild-Serre-type spectral sequence, closely inspired by 
\cite{HS}, which is designed to compute the commutative cohomology of a commutative Lie algebra $\gf$ which admits a subalgebra $\hf$.  

The filtration leading to the spectral sequence is defined as follows. 
$${\mathcal F}^pCS^n(\gf,M):=\{c\in CS^n(\gf,M)\,|\,c(x_1,\ldots,x_n)=0\,\,{\rm if}\,\,n-p+1\,\,
{\rm elements}\,\,x_i\,\,{\rm belong}\,\,{\rm to}\,\,\hf\}.$$

\begin{lem}
The filtration is compatible with the Chevalley-Eilenberg coboundary operator (\ref{CE_coboundary}), i.e.
$$d{\mathcal F}^pCS^n(\gf,M)\subset{\mathcal F}^pCS^{n+1}(\gf,M).$$
\end{lem}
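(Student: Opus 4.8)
The plan is to verify directly that if $c \in \mathcal{F}^pCS^n(\gf,M)$, then $dc$ vanishes whenever $n-p+2$ of its arguments $x_1,\dots,x_{n+1}$ lie in $\hf$. So I would fix such a cochain $c$ and elements $x_1,\dots,x_{n+1} \in \gf$ with at least $n-p+2$ of them belonging to $\hf$, and examine the two kinds of terms in the formula (\ref{CE_coboundary}) for $dc(x_1,\dots,x_{n+1})$.

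First I would treat the action terms $\sum_i x_i \cdot c(x_1,\dots,\hat{x}_i,\dots,x_{n+1})$. Dropping one argument leaves $n$ arguments, of which at least $n-p+2-1 = n-p+1$ still lie in $\hf$; hence each argument-list fed to $c$ has $n-p+1$ elements in $\hf$, so $c$ of it is zero by the defining condition of $\mathcal{F}^pCS^n(\gf,M)$, and the whole sum vanishes. Next I would treat the bracket terms $\sum_{i<j} c([x_i,x_j],x_1,\dots,\hat{x}_i,\dots,\hat{x}_j,\dots,x_{n+1})$. Here two arguments are removed and one new argument $[x_i,x_j]$ is inserted, so $c$ is evaluated on $n$ arguments. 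The key point is that $\hf$ is a subalgebra: if both $x_i$ and $x_j$ lie in $\hf$, then $[x_i,x_j]\in\hf$, so the new argument list still contains at least $(n-p+2) - 2 + 1 = n-p+1$ elements of $\hf$; if at most one of $x_i,x_j$ lies in $\hf$, then at least $(n-p+2)-1 = n-p+1$ of the remaining original arguments lie in $\hf$, so again the list has $n-p+1$ elements of $\hf$. In every case $c$ evaluates to zero, so the bracket sum vanishes as well, and therefore $dc \in \mathcal{F}^pCS^{n+1}(\gf,M)$.

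The only substantive point — and the step I would flag as the one to get right — is the bookkeeping in the bracket terms, specifically the use of the subalgebra hypothesis $[\hf,\hf]\subseteq\hf$ to handle the case where the bracketed pair is entirely inside $\hf$: one must check that replacing two $\hf$-arguments by the single $\hf$-argument $[x_i,x_j]$ does not drop the count of $\hf$-arguments below the threshold $n-p+1$. The symmetry $[x,y]=[y,x]$ plays no role here beyond making $CS^\bullet$ well-defined, and the case analysis on how many of $x_i,x_j$ lie in $\hf$ is entirely elementary, so no further obstacle is expected.
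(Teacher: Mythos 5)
Your proof is correct and follows essentially the same route as the paper: the action terms vanish because dropping one argument still leaves at least $n-p+1$ arguments in $\hf$, and the bracket terms vanish by the same count, using the subalgebra hypothesis $[\hf,\hf]\subseteq\hf$ in the case where both bracketed arguments lie in $\hf$. Your case analysis for the bracket terms is just a slightly more explicit version of the paper's remark that ``even if both $x_i$ and $x_j$ are in $\hf$, the bracket will also be in $\hf$.''
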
 

\begin{proof}
Consider Equation (\ref{CE_coboundary}) with $n+1-p+1=n-p+2$ elements in $\hf$. We have to show that all terms in this equation are zero in case the cochain $f$ vanishes on $n-p+1$ elements of $\hf$. The action terms vanish when $n-p+2$ elements are in $\hf$, because there always remain at least $n-p+1$ elements in $\hf$ as arguments of $f$. The situation is similar for the bracket terms, because even if both $x_i$ and $x_j$ are in $\hf$, the bracket will also be in $\hf$ as $\hf$ is a subalgebra, thus also in this case the corresponding term vanishes.  
\end{proof}

The next step in the construction of the spectral sequence  is the computation of the term $E_0$ which is by definition the associated graded space to the filtration, i.e.
$$E_0^{p,q}:={\mathcal F}^pCS^{p+q}(\gf,M)\,/\,{\mathcal F}^{p+1}CS^{p+q}(\gf,M).$$

\begin{lem}
The vector space $E_0^{p,q}$ is isomorphic to
$$E_0^{p,q}\cong\Hom(S^q\hf,\Hom(S^p(\gf/\hf),M)).$$
\end{lem}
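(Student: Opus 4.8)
The statement is a standard "associated graded of a Hochschild–Serre-type filtration" computation, so I expect the proof to be a direct identification of the quotient space $E_0^{p,q}={\mathcal F}^pCS^{p+q}(\gf,M)/{\mathcal F}^{p+1}CS^{p+q}(\gf,M)$ with the indicated $\Hom$-space. The first step is to choose, once and for all, a vector space complement $\kf$ to $\hf$ in $\gf$, so that $\gf=\hf\oplus\kf$ as $\F$-vector spaces and $\kf\cong\gf/\hf$; since we are over a field this is always possible. Then, because $S^\bullet$ is a (graded-commutative, but here honestly commutative since $\ch\F=2$) symmetric algebra, one has the Künneth-type decomposition $S^{n}\gf\cong\bigoplus_{a+b=n}S^a\hf\otimes S^b\kf$, and hence $CS^n(\gf,M)=\Hom_\F(S^n\gf,M)\cong\bigoplus_{a+b=n}\Hom_\F(S^a\hf\otimes S^b\kf,M)$.

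**Key steps.** Under this decomposition I would check that a cochain $c\in CS^{p+q}(\gf,M)$ lies in ${\mathcal F}^pCS^{p+q}(\gf,M)$ — i.e. vanishes whenever at least $(p+q)-p+1=q+1$ of its arguments lie in $\hf$ — precisely when all the components $\Hom_\F(S^a\hf\otimes S^b\kf,M)$ with $a\ge q+1$ (equivalently $b\le p-1$) vanish; the point is that an element of $\hf$ fed into a factor coming from $S^a\hf$ is "in $\hf$", while the complement directions $\kf$ may or may not be further hit by elements of $\hf$, but in the worst case one can test with $a$ elements of $\hf$ and $b$ elements of $\kf$, so the only components forced to die are those with $a\ge q+1$. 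Thus ${\mathcal F}^pCS^{p+q}(\gf,M)\cong\bigoplus_{a\le q}\Hom_\F(S^a\hf\otimes S^{p+q-a}\kf,M)$ and ${\mathcal F}^{p+1}CS^{p+q}(\gf,M)\cong\bigoplus_{a\le q-1}\Hom_\F(S^a\hf\otimes S^{p+q-a}\kf,M)$, so the quotient is the single summand $a=q$, namely $\Hom_\F(S^q\hf\otimes S^p\kf,M)$. Finally I would invoke the tensor–hom adjunction $\Hom_\F(S^q\hf\otimes S^p\kf,M)\cong\Hom_\F(S^q\hf,\Hom_\F(S^p\kf,M))$ together with $\kf\cong\gf/\hf$ to get the claimed form $\Hom(S^q\hf,\Hom(S^p(\gf/\hf),M))$.

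**Main obstacle.** The genuinely delicate point is the characterization of ${\mathcal F}^p$ in terms of the Künneth summands: one must be careful that the filtration condition is phrased as "vanishes if $q+1$ arguments belong to $\hf$" without specifying the others, so one has to argue that a nonzero component in $\Hom_\F(S^a\hf\otimes S^b\kf,M)$ with $a\ge q+1$ really does produce a violation (feed $a$ copies of suitable $\hf$-vectors and $b$ copies of $\kf$-vectors, using nondegeneracy of the symmetric pairing), while components with $a\le q$ never can (any input with $q+1$ entries in $\hf$, when expanded via $\gf=\hf\oplus\kf$, lands only in summands with first degree $\ge q+1$). A secondary subtlety worth a remark is that everything is happening over a field of characteristic $2$, so $S^\bullet$ here is the genuine symmetric algebra (not an exterior-type object) and the Künneth isomorphism for symmetric powers of a direct sum holds without sign issues; divided-power phenomena do not interfere because we only use the decomposition of $S^n(\hf\oplus\kf)$ as a direct sum of $S^a\hf\otimes S^b\kf$, which is valid for the symmetric algebra of a direct sum of vector spaces over any field.
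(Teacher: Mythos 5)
Your proof is correct, and it rests on the same underlying choice as the paper's --- a linear splitting of $\hf\hookrightarrow\gf$ --- but it is packaged differently. The paper defines the restriction map $r:{\mathcal F}^pCS^{p+q}(\gf,M)\to\Hom(S^q\hf,\Hom(S^p(\gf/\hf),M))$ (restrict the first $q$ arguments to $\hf$), observes that its kernel is ${\mathcal F}^{p+1}CS^{p+q}(\gf,M)$, and proves surjectivity by exhibiting an explicit preimage as a sum over $(p,q)$-shuffles of $\tilde f$ precomposed with a linear projection onto $\hf$ and the quotient map $\pi:\gf\to\gf/\hf$; that shuffle formula is exactly the inclusion of the summand $S^q\hf\otimes S^p\kf$ in your decomposition $S^{p+q}(\hf\oplus\kf)\cong\bigoplus_{a+b=p+q}S^a\hf\otimes S^b\kf$. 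Your route decomposes the whole cochain space at once and characterizes every filtration step ${\mathcal F}^p$ as the sum of the components with $a\le q$, which buys a little more: you identify the filtration itself, not only its associated graded, and the kernel and surjectivity checks reduce to bookkeeping about which summands survive. What it costs is the two verifications you correctly single out, namely that the direct-sum decomposition of $S^n$ of a direct sum holds for the quotient-defined symmetric powers (true over any field, and the right thing to flag in characteristic $2$), and that a nonzero component with $a\ge q+1$ really violates the filtration condition (test on monomials $h_1\cdots h_a\otimes k_1\cdots k_b$, which span the summand). The paper instead verifies by hand that only one shuffle term contributes to $r(f)$. Both arguments are complete and amount to the same computation.
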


\begin{proof}
The isomorphism is induced by the restriction map 
$$r:CS^{p+q}(\gf,M)\to \Hom(S^q\hf,\Hom(S^p(\gf/\hf),M))$$
which restricts the first $q$ arguments of the cochain $f$ to elements of $\hf$. Clearly, the kernel of $r|_{{\mathcal F}^{p}CS^{p+q}(\gf,M)}$ is ${\mathcal F}^{p+1}CS^{p+q}(\gf,M)$. 
It remains to show that $r|_{{\mathcal F}^{p}CS^{p+q}(\gf,M)}$ is surjective. This follows as in the proof of Theorem 1 (p.593) of \cite{HS}: Denote by $\pi:\gf\to\gf/\hf$ the natural quotient map and by $p:\gf\to\hf$ a linear projection onto the subspace $\hf\hookrightarrow \gf$. For a given $\tilde{f}\in \Hom(S^q\hf,\Hom(S^p(\gf/\hf),M))$, we define a preimage $f\in{\mathcal F}^{p}CS^{p+q}(\gf,M)$ of $\tilde{f}$ by 
$$f(x_1,\ldots,x_n):=\sum_{\sigma\in Sh(p,q)}\tilde{f}(p(x_{\sigma(1)}),\ldots,p(x_{\sigma(q)}))
(\pi(x_{\sigma(q+1)}),\ldots,\pi(x_{\sigma(p+q)})),$$
where the sum extends over all $(p,q)$-shuffles. Observe that only one term of the sum 
contributes to $r(f)$, because, as the shuffles keep the order in the first $q$ arguments, only one term has all its elements in $\hf$ and for the other terms, there is $\pi$ applied to at least one element of $\hf$ which gives zero. Thus  $r(f)=\tilde{f}$ as had to be shown. 
\end{proof}

Now we identify the differential $d_1$ which is by definition induced from the Chevalley-Eilenberg coboundary operator (\ref{CE_coboundary}) on $E_0$. 
Observe for this that the quotient $\gf/\hf$ (which is in general not a Lie algebra !) becomes 
naturally an $\hf$-module using the adjoint action, because $\hf$ is a subalgebra. 

\begin{lem}
The differential $d_0$ on $E_0$ identifies to the Chevalley-Eilenberg coboundary operator on 
$CS^\bullet(\hf,\Hom(S^p(\gf/\hf),M))$, the commutative cohomology of the commutative Lie algebra $\hf$ with values in the $\hf$-module $\Hom(S^p(\gf/\hf),M)$. 
\end{lem}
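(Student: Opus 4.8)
The plan is to compute the differential $d_0$ on $E_0$ directly from the formula (\ref{CE_coboundary}) after transporting it along the restriction isomorphism of the previous lemma, and then recognize the surviving terms as precisely the Chevalley-Eilenberg differential of $\hf$ acting on the coefficient module $\Hom(S^p(\gf/\hf),M)$. Concretely, I would start from a representative $f \in {\mathcal F}^p CS^{p+q}(\gf,M)$ and evaluate $df$ on $q+1$ arguments $y_1,\ldots,y_{q+1}$ in $\hf$ followed by $p$ arguments $z_1,\ldots,z_p$ in $\gf$; by the description of $E_0^{p,q}$ this determines the class of $df$ in $E_0^{p,q+1}$, since restricting $q+1$ of the first arguments to $\hf$ is exactly the map $r$ in degree $p+q+1$.

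The key step is to sort the terms of (\ref{CE_coboundary}) by whether or not they keep at least $q+1$ of the evaluated elements inside $\hf$. An action term $x_i \cdot f(\ldots \hat{x}_i \ldots)$ with $x_i = y_k \in \hf$ keeps all the remaining $y_j$'s in $\hf$ and contributes $y_k$ acting on the coefficient; an action term with $x_i = z_k \notin \hf$ removes one of the $z$-arguments, and since the coefficient module is $\Hom(S^p(\gf/\hf),M)$ with $\gf/\hf$ carrying the adjoint $\hf$-action, these are exactly the terms that realize the module action of $\hf$ on $\Hom(S^p(\gf/\hf),M)$ in the Chevalley-Eilenberg formula. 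A bracket term $f([x_i,x_j],\ldots)$ with both indices among the $y$'s gives $[y_i,y_j] \in \hf$ and survives as the bracket term of the $\hf$-complex; a bracket term with exactly one index, say $x_i = y_k$ and $x_j = z_l$, produces $[y_k,z_l]$, whose image in $\gf/\hf$ is $y_k \cdot \pi(z_l)$ — again part of the adjoint module structure; and a bracket term with both indices among the $z$'s, or a term that pairs a $y$ with a $z$ and then has fewer than $q$ of the remaining arguments in $\hf$, lands in ${\mathcal F}^{p+1}$ and so vanishes in $E_0$. Once this bookkeeping is done, what remains is visibly the formula
$$(d_{\hf}\bar f)(y_1,\ldots,y_{q+1}) = \sum_{k} y_k \cdot \bar f(y_1,\ldots,\hat{y}_k,\ldots,y_{q+1}) + \sum_{k<l} \bar f([y_k,y_l],y_1,\ldots,\hat{y}_k,\ldots,\hat{y}_l,\ldots,y_{q+1})$$
with coefficients in $\Hom(S^p(\gf/\hf),M)$, which is the claim.

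The main obstacle I expect is the careful treatment of the ``mixed'' terms — action terms on a $z$-argument and bracket terms pairing a $y$ with a $z$ — and verifying that, modulo ${\mathcal F}^{p+1}$, they assemble exactly into the $\hf$-module action on $\Hom(S^p(\gf/\hf),M)$ rather than into spurious extra contributions. This is where one must use both that $\hf$ is a subalgebra (so $[y_i,y_j] \in \hf$, making the pure bracket terms well-defined in the $\hf$-complex) and that the module structure on $\gf/\hf$ is the one induced from the adjoint action (so that $\pi([y_k,z_l]) = y_k \cdot \pi(z_l)$ is precisely the term appearing when one writes the Chevalley-Eilenberg differential of $\hf$ with values in a $\Hom$-module). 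One also has to check that the symmetric (shuffle-symmetrized) nature of the cochains is respected throughout, but in characteristic $2$ the symmetric and alternating conventions coincide on the combinatorial level, and the representative chosen in the previous lemma is already built out of $(p,q)$-shuffles, so this causes no real trouble. The remaining computations are routine term-matching.
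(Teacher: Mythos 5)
Your proposal is correct and follows essentially the same route as the paper: evaluate the Chevalley--Eilenberg coboundary on $q+1$ arguments from $\hf$ and $p$ arguments from $\gf$, and sort the resulting terms by whether they survive modulo ${\mathcal F}^{p+1}$, recognizing the survivors as the coboundary of $CS^\bullet(\hf,\Hom(S^p(\gf/\hf),M))$. Your explicit bookkeeping of the mixed bracket terms $f([y_k,z_l],\ldots)$ as the adjoint part of the $\hf$-module structure on $\Hom(S^p(\gf/\hf),M)$ is in fact more careful than the paper's own proof, which only records that the bracket terms with both entries in $\hf$ survive and leaves the mixed terms implicit in the module action.
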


\begin{proof}
The differential 
$$d_0:{\mathcal F}^pCS^{p+q}(\gf,M)\,/\,{\mathcal F}^{p+1}CS^{p+q}(\gf,M) \to {\mathcal F}^pCS^{p+q+1}(\gf,M)\,/\,{\mathcal F}^{p+1}CS^{p+q+1}(\gf,M)$$
is by definition induced by the Chevalley-Eilenberg coboundary operator (\ref{CE_coboundary}). For the identification of $d_0$, we consider again Equation (\ref{CE_coboundary}), but now  with $p+q+1-(p+1)+1=q+1$ elements in $\hf$, because we want to see which terms lie in ${\mathcal F}^{p+1}CS^{p+q+1}(\gf,M)$, and with a cochain $f$
which vanishes on $p+q-p+1=q+1$ elements of $\hf$.  

The action terms do not vanish exactly if $x_i$ is extracted from the first $q$ elements (under the identification of $E_0$ with $\Hom(S^q\hf,\Hom(S^p(\gf/\hf),M))$), otherwise $q+1$ elements in $\hf$ stay as arguments of $f$ and the resulting term is zero. Similarly for the bracket terms, they do not vanish exactly in case $x_i$ and $x_j$ are both in $\hf$, because the number of elements in $\hf$ is then reduced by one. This shows the claim.  
\end{proof}


\section{The spectral sequence associated to an ideal}

One can go further in the identification of the terms in the spectral sequence in case $\hf$ is not only a subalgebra, but an ideal in $\gf$. Suppose now we have a short exact sequence of
commutative Lie algebras
$$0\to \hf\to \gf\to \qf\to 0.$$ 

We have seen in the previous section that 
$$E_1^{p,q}\cong\Hom(S^p(\gf/\hf),HS^q(\hf,M))\cong\Hom(S^p\qf,HS^q(\hf,M)),$$
where we have exchanged arguments in order to express the cohomology with respect to 
$\hf$. 

\begin{lem}
The differential $d_1$ on $E_1$ identifies with the Chevalley-Eilenberg differential of $\qf$ with values in $HS^q(\hf,M)$. 
\end{lem}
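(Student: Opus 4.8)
The plan is to identify $d_1$ by unwinding the general definition of the differential on the $E_1$-page of a filtered complex and reading off which terms of the Chevalley-Eilenberg coboundary (\ref{CE_coboundary}) survive. Recall that $d_1$ takes a class in $E_1^{p,q} = \HCE^q$ of the complex $(E_0^{p,\bullet}, d_0)$, represented by a cocycle $c \in \mathcal{F}^p CS^{p+q}(\gf,M)$ with $dc \in \mathcal{F}^{p+1}CS^{p+q+1}(\gf,M)$, and sends it to the class of $dc$ in $E_1^{p+1,q}$. So concretely I would: fix $\tilde{f} \in \Hom(S^p\qf, \HCE^q(\hf,M))$, lift it to a $d_0$-cocycle $\bar{f} \in E_0^{p,q} \cong \Hom(S^q\hf, \Hom(S^p(\gf/\hf),M))$, then lift that to $f \in \mathcal{F}^p CS^{p+q}(\gf,M)$ using the shuffle formula from the surjectivity proof of the earlier lemma, compute $df$ via (\ref{CE_coboundary}), and restrict the result to $\Hom(S^q\hf, \Hom(S^{p+1}(\gf/\hf),M))$ by putting the first $q$ arguments in $\hf$ and the last $p+1$ arguments arbitrary.

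The key step is the bookkeeping of terms. In $df(y_1,\dots,y_q, x_1,\dots,x_{p+1})$ with $y_i \in \hf$ and $x_j \in \gf$, the action terms split according to whether the extracted argument is one of the $y_i$ or one of the $x_j$; the bracket terms $f([z,z'],\dots)$ split according to whether $\{z,z'\}$ lies entirely in $\{y_i\}$, entirely in $\{x_j\}$, or is mixed. The terms in which the extracted/bracketed arguments come from the $y_i$'s, or in which both bracketed elements are $y_i$'s, are exactly the ones already accounted for by $d_0$ (they do not raise filtration degree, so they vanish in $E_1^{p+1,q}$ since $\bar f$ is a $d_0$-cocycle). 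The mixed bracket terms $f([y_i, x_j], \dots)$ — since $\hf$ is now an \emph{ideal}, $[y_i,x_j] \in \hf$ — together with the action terms $x_j \cdot f(\dots)$ on the last $p+1$ slots, are precisely what survives in $E_1^{p+1,q}$, and these are exactly the terms of the Chevalley-Eilenberg coboundary of $\qf$ with coefficients in $\HCE^q(\hf,M)$: the $x_j$-action term gives the $\qf$-module action on $\HCE^q(\hf,M)$ (which is well-defined precisely because $\hf$ is an ideal, so that the Lie derivatives $L_{y_i}$ act trivially on cohomology and the residual action factors through $\qf$, by Proposition \ref{prop_Cartan_relation}), and the mixed bracket term produces the $\qf$-bracket term $[\pi(x_i),\pi(x_j)]$ in $\qf$-cohomology. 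The shuffle signs are irrelevant in characteristic $2$, which simplifies the combinatorics.

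The main obstacle I expect is showing cleanly that the residual $\qf$-action on $HS^q(\hf,M)$ is well-defined and matches the action terms appearing in $df$: one must check that changing the lift (the linear splitting $p: \gf \to \hf$, or the representative cocycle) changes $df$ only by a $d_0$-coboundary, so that the induced map on $E_1$ is independent of choices, and that the $\hf$-module structure on $\Hom(S^p(\gf/\hf),M)$ passes to an honest $\qf$-module structure on cohomology. This is the standard subtlety in the Hochschild--Serre construction; here it rests on Proposition \ref{prop_Cartan_relation} exactly as in \cite{HS}. Once this is in place, comparing the two expressions for the differential termwise — which is purely formal given the shuffle lifting formula — completes the identification.
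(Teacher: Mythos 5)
Your overall strategy is the right one, and it is essentially the argument the paper leaves implicit: the paper's own proof merely records that the $\qf$-action on $HS^q(\hf,M)$ is well defined because $\hf$ acts trivially on its cohomology by the Cartan relation (Proposition \ref{prop_Cartan_relation}), and then declares the identification of the remaining terms of (\ref{CE_coboundary}) to be clear. However, the explicit bookkeeping you carry out --- which is the actual content of the lemma --- contains a concrete mismatch. You assert that the surviving terms are the action terms $x_j\cdot f(\dots)$ together with the mixed bracket terms $f([y_i,x_j],\dots)$, and that the latter ``produce the $\qf$-bracket term $[\pi(x_i),\pi(x_j)]$''. This cannot be right: $[y_i,x_j]$ lies in $\hf$ (precisely because $\hf$ is an ideal), so it projects to $0$ in $\qf$ and cannot account for a bracket of two $\qf$-arguments. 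The correct matching is: (i) the action terms extracting one of the last $p+1$ arguments, \emph{combined with} the mixed bracket terms, assemble into the Lie derivative $L_{x_j}$ applied to the cochain in $CS^q(\hf,\Hom(S^p\qf,M))$, i.e.\ they give the $\qf$-module-action term $\pi(x_j)\cdot\tilde f(\dots)$ of the Chevalley--Eilenberg coboundary of $\qf$; and (ii) the $\qf$-bracket terms $\tilde f(y_1,\dots,y_q)([\pi(x_i),\pi(x_j)],\dots)$ come from the bracket terms of (\ref{CE_coboundary}) in which \emph{both} entries are among the last $p+1$ (non-$\hf$) arguments --- a class of surviving terms you have omitted from your list entirely. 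These pure-$\gf$ bracket terms do survive: $f$ evaluated on $q$ elements of $\hf$ together with the $p$ elements $[x_i,x_j],x_1,\dots,\hat{x}_i,\dots,\hat{x}_j,\dots,x_{p+1}$ is generically nonzero, and under the shuffle lift it equals $\tilde f(y_1,\dots,y_q)([\pi(x_i),\pi(x_j)],\dots)$ since $\pi$ is a morphism of commutative Lie algebras.

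Apart from this mismatch the proof is sound: the terms involving only $\hf$-arguments are indeed accounted for by $d_0$, the well-definedness of the $\qf$-action on $HS^q(\hf,M)$ does rest on Proposition \ref{prop_Cartan_relation} exactly as you say, and the independence of the choices of splitting and representative is standard filtered-complex formalism. Once you reassign the mixed bracket terms to the module-action part and restore the pure-$\qf$ bracket terms to the list of survivors, the termwise comparison closes and the lemma follows.
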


\begin{proof}
Note that the action of $\qf$ on $HS^\bullet(\hf,M)$ is well-defined, because the Lie algebra 
$\hf$ acts trivially on its cohomology by the Cartan relation, see Proposition \ref{prop_Cartan_relation}.

It is clear (noting also that $\hf$ acts trivially on $\qf$) that the differential $d_1$ is given by the remaining terms of the Chevalley-Eilenberg coboundary operator (\ref{CE_coboundary}), thus the claim. 
\end{proof}

\begin{cor} \label{HS_for_ideal}
The $E_2$-term of the spectral sequence associated to an ideal reads
$$E_2^{p,q}=HS^p(\qf,HS^q(\hf,M)).$$
\end{cor}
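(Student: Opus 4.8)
The plan is to derive Corollary \ref{HS_for_ideal} as a formal consequence of the preceding two lemmas together with the general machinery of spectral sequences of a filtered complex. Recall that for any filtered cochain complex, the $E_2$-term is by definition the cohomology of $(E_1,d_1)$; thus all that is needed is to feed in the explicit identifications already established. From the spectral sequence associated to a subalgebra (Section 2) we have $E_1^{p,q}\cong\Hom(S^p(\gf/\hf),HS^q(\hf,M))$, obtained by taking $d_0$-cohomology of $E_0^{p,q}\cong\Hom(S^q\hf,\Hom(S^p(\gf/\hf),M))$; indeed $d_0$ was identified with the Chevalley-Eilenberg differential on $CS^\bullet(\hf,\Hom(S^p(\gf/\hf),M))$, so its cohomology in degree $q$ is $HS^q(\hf,\Hom(S^p(\gf/\hf),M))=\Hom(S^p(\gf/\hf),HS^q(\hf,M))$, the last identification because $S^p(\gf/\hf)$ is a trivial object here and $\Hom$ commutes with taking cohomology in the second variable. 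When $\hf$ is an ideal, $\gf/\hf\cong\qf$ as vector spaces, and this is a morphism of $\qf$-modules (here using that the adjoint action descends), so $E_1^{p,q}\cong\Hom(S^p\qf,HS^q(\hf,M))$.

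Next I would invoke the Lemma immediately preceding the corollary: the differential $d_1\colon E_1^{p,q}\to E_1^{p+1,q}$ is identified with the Chevalley-Eilenberg coboundary operator of the commutative Lie algebra $\qf$ acting on the $\qf$-module $HS^q(\hf,M)$. This requires knowing that $HS^q(\hf,M)$ really is a $\qf$-module, which is exactly the content of the remark in that proof: $\hf$ acts trivially on its own cohomology by the Cartan relation (Proposition \ref{prop_Cartan_relation}), so the $\gf$-action on $HS^q(\hf,M)$ factors through $\qf=\gf/\hf$. Granting this, $(E_1^{\bullet,q},d_1)$ is literally the cochain complex $CS^\bullet(\qf,HS^q(\hf,M))$ computing commutative cohomology of $\qf$.

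The conclusion is then immediate: taking cohomology of $(E_1,d_1)$ in the $p$-direction for each fixed $q$ gives
$$E_2^{p,q}=H^p\big(E_1^{\bullet,q},d_1\big)=HS^p\big(\qf,HS^q(\hf,M)\big),$$
which is the claimed formula. I do not anticipate a genuine obstacle here, since the statement is a bookkeeping assembly of results already proved; the only point requiring a moment's care is the chain of identifications $HS^q(\hf,\Hom(S^p(\gf/\hf),M))\cong\Hom(S^p\qf,HS^q(\hf,M))$ and the verification that it is compatible with the $\qf$-actions and with $d_1$, so that the $d_1$-complex is genuinely $CS^\bullet(\qf,HS^q(\hf,M))$ rather than merely abstractly isomorphic to it. This is handled by observing that the restriction/shuffle isomorphism of the $E_0$-lemma is natural in all arguments, so it intertwines the relevant operators, and then everything follows.
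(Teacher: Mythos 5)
Your proposal is correct and follows the paper's route exactly: the paper treats the corollary as an immediate consequence of the identification $E_1^{p,q}\cong\Hom(S^p\qf,HS^q(\hf,M))$ from the previous section together with the lemma identifying $d_1$ with the Chevalley--Eilenberg differential of $\qf$ with values in $HS^q(\hf,M)$ (the $\qf$-module structure being well defined because $\hf$ acts trivially on its own cohomology by the Cartan relation). Your additional remarks on naturality of the restriction/shuffle isomorphism are a reasonable elaboration of what the paper leaves implicit, but introduce nothing beyond the paper's argument.
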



\section{Applications of the spectral sequence} \label{section_applications_HS}

Example D in \cite{FW} is the $2$-dimensional commutative Lie algebra ${\mathfrak N}:=\F e\oplus \F f$ generated by $e$ and $f$ with the relations $[f,f]=e$. The subalgebra $\hf=\langle e\rangle_\F$ is an ideal. Example E in \cite{FW} is the $2$-dimensional Lie algebra 
${\mathfrak a}:=\F h\oplus\F e$ with the bracket determined by $[h,e]=-[e,h]=e$ which becomes commutative over $\F$ of characteristic $2$. The subalgebra $\hf=\langle e\rangle_\F$ is an ideal in ${\mathfrak a}$. Observe that in these examples, the bracket on the subalgebra is zero, thus the coboundary operator on the complex $CS^\bullet(\hf,\F)$ for cohomology with trivial coefficients is zero. 
Thus for these examples and trivial coefficients, it will be not simply an easy argument of cohomology vanishing which computes the Hochschild-Serre spectral sequence, because $HS^q(\hf,\F)=S^q(\hf,\F)$ is non-zero for all $q\geq 0$. Note however that all the spaces 
$S^q(\hf,\F)$ are $1$-dimensional as $\hf$ is $1$-dimensional. 
We then have $HS^p({\mathfrak N}/\hf,HS^q(\hf,\F))=HS^p({\mathfrak N}/\hf,S^q(\hf,\F))$ and the coboundary operator computing the latter cohomology is still zero, because ${\mathfrak N}/\hf=\langle f\rangle_\F$ is a trivial quotient Lie algebra and the action is also zero. Thus  
$$E_2^{p,q}=S^p({\mathfrak N}/\hf,\F)\otimes S^q(\hf,\F)$$
is $1$-dimensional in this case, and the computation of the spectral sequence has to continue with the computation of 
$$d_2^{p,q}:S^p({\mathfrak N}/\hf,\F)\otimes S^q(\hf,\F)\to S^{p+2}({\mathfrak N}/\hf,\F)\otimes S^{q-1}(\hf,\F),$$
which is induced by the bracket $[f,f]=e$. For the latter coboundary operator, it matters only whether there is an even or an odd number of bracket terms. But for $n=p+q$, the coboundary operator has $\frac{n(n+1)}{2}$ bracket terms, which gives zero bracket terms for $n=0$, one bracket term for $n=1$, three bracket terms for $n=2$, six terms for $n=4$ and so on. It follows that $d_2^{p,q}=0$ if and only if $\frac{(p+q)(p+q+1)}{2}$ is even, and $d_2^{p,q}$ is injective otherwise. But $\frac{(p+q)(p+q+1)}{2}$ is even if and only if $4\,|\,(p+q)(p+q+1)$. As only one of the numbers $p+q$ or $p+q+1$ can be even, there are thus two cases: Either $4\,|\,(p+q)$ or $4\,|\,(p+q+1)$. In the first case, $2\,|\,\frac{(p+q)(p+q-1)}{2}$, thus furthermore $d_2^{p-2,q+1}=0$. In the second case $d_2^{p-2,q+1}$ is injective. This permits to compute 
$$E_3^{p,q}=\left\{\begin{array}{ccc} S^p({\mathfrak N}/\hf,\F)\otimes S^q(\hf,\F) & {\rm if} &
4\,|\,(p+q) \\  \{0\} & {\rm if} &
4\,|\,(p+q+1) \\ \{0\} & {\rm else} & \end{array}\right. .$$ 
Observe that in the case $4\,|\,(p+q+1)$, it is the quotient space 
$$(S^p({\mathfrak N}/\hf,\F)\otimes S^q(\hf,\F))\,/\,(S^{p-2}({\mathfrak N}/\hf,\F)\otimes S^{q+1}(\hf,\F))$$
which is zero. 
The spectral sequences has vanishing higher differentials $d_r$ for $r\geq 3$ and the $E_3$-term thus computes the commutative cohomology of ${\mathfrak N}$ with trivial coefficients.
We obtain
$$HS^n({\mathfrak N},\F))=\left\{\begin{array}{ccc} \F^{n+1} & {\rm if} & 4\,|\,n \\
\{0\} & {\rm if} & {\rm not} \end{array}\right. .$$ 

For the Lie algebra ${\mathfrak a}$ and trivial coefficients, the situation is similar: $HS^q(\hf,\F)=S^q(\hf,\F)$ is non-zero for all $q\geq 0$. But here it is already the coboundary operator computing the cohomology $HS^p({\mathfrak a}/\hf,S^q(\hf,\F))$ which involves the original bracket $[h,e]=-[e,h]=e$ in its action terms. As all these action terms are non-zero, it again depends only on the parity of the number of terms whether the coboundary operator is zero or injective. We obtain as before 
$$E_2^{p,q}=\left\{\begin{array}{ccc} S^p({\mathfrak a}/\hf,\F)\otimes S^q(\hf,\F)) & {\rm if} &
4\,|\,(p+q) \\  \{0\} & {\rm if} &
4\,|\,(p+q+1) \\ \{0\} & {\rm else} & \end{array}\right. .$$ 
Again, all higher differentials $d_r$ for $r\geq 2$ are zero and the $E_2$-term computes the commutative cohomology of ${\mathfrak a}$ with trivial values. We obtain again
$$HS^n({\mathfrak a},\F))=\left\{\begin{array}{ccc} \F^{n+1} & {\rm if} & 4\,|\,n \\
\{0\} & {\rm if} & {\rm not} \end{array}\right. .$$

In order to have an easier example, let us consider the following. Let $\hf=\F$ be the $1$-dimensional abelian Lie algebra generated by $x\in\hf$. Let us take as a module the $1$-dimensional $\hf$-module 
$\F_{\lambda}$ generated by $v\in\F_\lambda$ such that the action is given by $x\cdot v=\lambda v$ for a fixed $\lambda\in\F$. Then all symmetric or Leibniz cohomology of $\hf$ 
with values in (the symmetric $\hf$-bimodule) $\F_1$ is zero. Indeed, the complexes are $1$-dimensional in every degree with a coboundary operator having zero bracket part, but the action part is the identity in even degree and zero in odd degree. This gives zero cohomology.  

Therefore, we obtain the following theorem.

\begin{thm} \label{cohomology_vanishing}
Let $\gf$ be commutative Lie algebra which has a $1$-dimensional ideal $\hf$. Suppose that 
$\hf$ acts non-trivially on the module $\F_1$. Then 
$$HS^\bullet(\gf,\F_1)=0.$$ 
\end{thm}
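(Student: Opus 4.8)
The plan is to feed the short exact sequence $0\to\hf\to\gf\to\qf\to 0$ into the Hochschild--Serre spectral sequence established in Corollary \ref{HS_for_ideal}, whose $E_2$-term is $E_2^{p,q}=HS^p(\qf,HS^q(\hf,\F_1))$. The crucial input is the computation carried out just above the statement: since $\hf$ is $1$-dimensional and acts on $\F_1$ by the identity (the ``$\lambda=1$'' case), the complex $CS^\bullet(\hf,\F_1)$ is $1$-dimensional in each degree, its bracket part vanishes because $\hf$ is abelian, and its action part alternates between the identity map (even degrees) and the zero map (odd degrees). Hence $HS^q(\hf,\F_1)=0$ for all $q\ge 0$.

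With that in hand the argument is immediate: every row of the $E_2$-page is $HS^p(\qf,HS^q(\hf,\F_1))=HS^p(\qf,0)=0$, so $E_2^{p,q}=0$ for all $p,q$. Since the spectral sequence converges to $HS^\bullet(\gf,\F_1)$ and the $E_2$-page already vanishes, all further pages vanish and therefore $HS^n(\gf,\F_1)=0$ for every $n$, which is the assertion.

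One point that deserves a sentence of care is that the coefficient module $\F_1$ must genuinely be an $\hf$-module with the restricted action, and that the Hochschild--Serre machinery of Sections 2--3 applies to it: this is fine because $\hf$ is an ideal of $\gf$, so $\F_1$ restricts to an $\hf$-module, and the hypothesis ``$\hf$ acts non-trivially'' together with $\dim\hf=1$ forces the restricted action of the generator $x\in\hf$ to be the identity on $\F_1$ (the only nonzero scalar in $\F$ is $1$), which is exactly the situation analyzed above. I would also note explicitly that the action of $\qf$ on $HS^q(\hf,\F_1)$ used in forming $E_2$ is well-defined by the Cartan relation (Proposition \ref{prop_Cartan_relation}), though here it is moot since that cohomology is zero anyway.

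The main obstacle is essentially nonexistent once the preliminary module computation is granted; the only thing to get right is the bookkeeping of \emph{which} cohomology vanishes (the cohomology of $\hf$, not of $\qf$) and invoking convergence of the spectral sequence correctly. I would therefore keep the proof to a few lines: recall $HS^q(\hf,\F_1)=0$ from the discussion preceding the theorem, plug this into $E_2^{p,q}=HS^p(\qf,HS^q(\hf,\F_1))$, conclude $E_2=0$, hence $E_\infty=0$, hence $HS^\bullet(\gf,\F_1)=0$.
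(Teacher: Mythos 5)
Your proposal is correct and follows exactly the paper's own argument: the paper likewise recalls from the discussion preceding the theorem that $HS^\bullet(\hf,\F_1)=0$ and plugs this into the $E_2$-term $HS^p(\qf,HS^q(\hf,\F_1))$ of Corollary \ref{HS_for_ideal}. Your additional remarks (restriction of the module to $\hf$, well-definedness of the $\qf$-action via the Cartan relation) are harmless elaborations of the same proof.
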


\begin{proof}
By the discussion before the statement of the theorem, we have $HS^\bullet(\hf,\F_1)=0.$
Inserting this in the formula of Corollary \ref{HS_for_ideal} shows the theorem.
\end{proof} 

Using again the formula of Corollary \ref{HS_for_ideal}, one can formulate conditions on a commutative Lie algebra $\gf$ with an ideal $\hf$ such that the $1$-dimensional quotient 
$\gf/\hf$ acts 
non-trivially on $HS^q(\hf,M)\cong\F_1$. In this case, we have also cohomology vanishing.


\section{Comparison to Lie- and Leibniz cohomology}

\subsection{Leibniz algebras and bimodules} 

As already observed, a commutative Lie algebra is in particular a left- and right Leibniz algebra. 

A {\em left Leibniz algebra\/} is an algebra $\lf$ such that every left bracket operator
$L_x:\lf\to\lf$, $y\mapsto [x,y]$ is a derivation. This is equivalent to the identity
\begin{equation}\label{LLI}
[x,[y,z]]=[[x,y],z]+[y,[x,z]]
\end{equation}
for all $x,y,z\in\lf$. There is a similar
definition of a {\em right Leibniz algebra\/}. Leibniz algebras have been studied by Loday and Pirashvili, see \cite{L}, \cite{LP} and \cite{P}. 

Every left Leibniz algebra has an important ideal, its Leibniz kernel, that measures how much
the Leibniz algebra deviates from being a Lie algebra. Namely, let $\lf$ be a left Leibniz algebra
over $\F$. Then $$\leib(\lf):=\langle [x,x]\mid x\in\lf\rangle_\mathbb{F}$$ is called the
{\em Leibniz kernel\/} of $\lf$. The Leibniz kernel $\leib(\lf)$ is an abelian ideal of $\lf$.
By definition of the Leibniz kernel, $\lf_\lie:=\lf/\leib(\lf)$ is a Lie algebra which we call the
{\em canonical Lie algebra\/} associated to $\lf$. In fact, the Leibniz kernel is the smallest
ideal such that the corresponding factor algebra is a Lie algebra. As alternativity 
(i.e. $[x,x]=0$ for all $x\in\gf$) and anticommutativity ($[x,y]=-[y,x]$ for all $x,y\in\gf$) are 
not equivalent in characteristic two, we see that commutative Lie algebras are a class lying 
properly in between Leibniz algebras and Lie algebras:
$${\tt Leib}\supset{\tt CommLie}\supset{\tt Lie}.$$ 
Observe that in this notation, commutative algebras are not a special case of Lie algebras !

Next, we will briefly discuss bimodules of left Leibniz algebras. Let $\lf$ be a
left Leibniz algebra over a field $\F$.
An {\em $\lf$-bimodule\/} is an $\F$-vector space $M$ with $\F$-bilinear left and right $\lf$-operations  $\lf\times M\to M$, $(x,m)\mapsto x\cdot m$ and $M\times
\lf\to M$, $(m,x)\mapsto m\cdot x$ such that
\begin{enumerate}
\item[(LLM)] $[x,y]\cdot m=x\cdot(y\cdot m)+y\cdot(x\cdot m)$,
\item[(LML)] $x\cdot(m\cdot y)=(x\cdot m)\cdot y+m\cdot[x,y]$,
\item[(MLL)] $m\cdot[x,y]=(m\cdot x)\cdot y+x\cdot(m\cdot y)$.
\end{enumerate}
are satisfied for every $m\in M$ and all $x,y\in\lf$. Observe that all this makes sense over the field $\F$ of characteristic $2$ and gives a notion of bimodules for commutative Lie algebras which goes beyond the usual notion of modules. 

\subsection{The three relative complexes} 

In \cite{P} (see also Section 2 of \cite{FW}), Pirashvili constructs a comparison spectral sequence between Chevalley-Eilenberg cohomology and Leibniz cohomology of a Lie algebra. 
Here, we can use the same construction to have three comparison spectral sequences:
There is a comparison spectral sequence 
between Chevalley-Eilenberg cohomology and commutative cohomology for a Lie algebra (passage from $\Lambda^\bullet$ to $S^\bullet$), then a comparison spectral sequence from commutative cohomology to Leibniz cohomology (passage from $S^\bullet$ to $\otimes^\bullet$) and also the usual Pirashvili comparison spectral sequence from Chevalley-Eilenberg cohomology to Leibniz cohomology (passage from $\Lambda^\bullet$ to $\otimes^\bullet$). Observe that all cohomologies involving the Chevalley-Eilenberg complex make only sense for {\it Lie algebras}, while the comparison spectral sequence from $S^\bullet$ to $\otimes^\bullet$ makes sense for the broader class of {\it commutative Lie algebras}. 

Let us first of all describe the different complexes which lead to these comparison spectral sequences. In characteristic $2$, there are two ways of expressing the idea of alternating cochains which are not equivalent (corresponding to {\it alternativity} and {\it antisymmetry} as above mentioned). The first one is:
\begin{equation}    \label{alternating}
\forall x\in\gf:\,\,\,f(\ldots,x,\ldots,x,\ldots)=0.
\end{equation}
This condition is equivalent to saying that $f(x_1,\ldots,x_n)$ vanishes in case the inserted elements $x_1,\ldots,x_n$ of $\gf$ are linearly dependent and describes cochains as maps on the usual exterior algebra $\Lambda^n\gf$. These cochains have the property that if $\gf$ is finite-dimensional of dimension $n$ and $f$ has $n+1$ arguments, then $f$ is zero. We will denote the quotient space of the vector space $\otimes^n\gf$ with respect to the subspace 
$$I_n:=\langle x_1\otimes\ldots\otimes x\otimes\ldots\otimes x\otimes\ldots\otimes x_{n-2}\,|\,
x,x_1,\ldots,x_{n-2}\in\gf\rangle_\F$$ 
by $\Lambda^n\gf:=\otimes^n\gf/I_n$ and the corresponding Chevalley-Eilenberg cochain spaces by 
$C^n(\gf,M):=\Hom_\F(\Lambda^n\gf,M)$. 

On the other hand, the condition 
\begin{equation}     \label{symmetric}
\forall x_1,x_2\in\gf:\,\,\,f(\ldots,x_1,x_2,\ldots)=f(\ldots,x_2,x_1,\ldots)
\end{equation}
describes cochains on the usual symmetric algebra and such a cochain does not have the vanishing property (\ref{alternating}) in general. Observe that condition (\ref{alternating}) implies condition (\ref{symmetric}) by replacing $x=x_1+x_2$. Here the $n$-th graded component $S^n\gf$ of the symmetric algebra $S^\bullet\gf$ is defined as the quotient of $\otimes^n \gf$ by the subspace 
$$J_n:=\langle x_1\otimes\ldots\otimes x\otimes y\otimes\ldots\otimes x_{n-2}+x_1\otimes\ldots\otimes y\otimes x\otimes\ldots\otimes x_{n-2}\,|\,x,y,x_1,\ldots,x_{n-2}\in\gf\rangle_\F$$ 
(and not as the invariants with respect to the symmetric group). We had already introduced 
$CS^n(\gf,M)=\Hom_\F(S^n\gf,M)$. 
The fact that condition (\ref{alternating}) implies condition (\ref{symmetric}) can also be understood as the inclusion $J_n\subset I_n$. 

We thus have three inclusions of subcomplexes (in case $\gf$ is a Lie algebra and the bimodule $M$ is symmetric)
$$i_1:C^\bullet(\gf,M)\hookrightarrow CL^\bullet(\gf,M),$$
$$i_2:C^\bullet(\gf,M)\hookrightarrow CS^\bullet(\gf,M)$$
because condition (\ref{alternating}) implies condition (\ref{symmetric}), and also 
$$i_3:CS^\bullet(\gf,M)\hookrightarrow CL^\bullet(\gf,M)$$
which works for general commutative Lie algebras $\gf$. 

All three give rise to cokernel complexes 
$$C_{\rm rel,\Lambda}^\bullet(\gf,M):=\coker(i_1)[-2],$$ 
$$C_{\rm rel,\Lambda,S}^\bullet(\gf,M):=\coker(i_2)[-2],$$
and 
$$C_{\rm rel,S}^\bullet(\gf,M):=\coker(i_3)[-2]$$ 
respectively (up to a degree shift), and we have the corresponding long exact sequences (like in Proposition 2.2 in \cite{FW}) induced by the short exact sequences of complexes. In each case, the relative cohomology (i.e. the cohomology of the quotient complex $C_{\rm rel,\Lambda}^\bullet(\gf,M)$, $C_{\rm rel,S}^\bullet(\gf,M)$ or $C_{\rm rel,\Lambda,S}^\bullet(\gf,M)$) measures the discrepancy between $H^\bullet(\gf,M)$ and $HL^\bullet(\gf,M)$,
resp. $HS^\bullet(\gf,M)$ and $HL^\bullet(\gf,M)$, resp. $H^\bullet(\gf,M)$ and $HS^\bullet(\gf,M)$ in the sense that if all relative cohomology vanishes, then the two cohomologies coincide.  

\subsection{Comparison Lie- to Leibniz cohomology} 

In this subsection, $\gf$ is a Lie algebra (i.e. $[x,x]=0$ for all $x\in\gf$). 
In order to obtain the comparison spectral sequences, we now introduce filtrations in these three complexes $C_{\rm rel,\Lambda}^\bullet(\gf,M)$,
$C_{\rm rel,S}^\bullet(\gf,M)$ and $C_{\rm rel,\Lambda,S}^\bullet(\gf,M)$ according to the condition of being alternating or symmetric (in the first $p$ arguments). 

Observe that due to the degree shift, a representative of a class in $C_{\rel,\Lambda}^n(\gf,M)$,  $C_{\rel,S}^n(\gf,M)$ or $C_{\rel,\Lambda,S}^n(\gf,M)$ has $n+2$ arguments.
On the quotient complex $C_{\rel,\Lambda}^\bullet(\gf,M)$, there is the following filtration
$${\mathcal F}^pC_{\rel,\Lambda}^{n}(\gf,M)=\{[c]\in C_{\rel,\Lambda}^{n}(\gf,M)\mid c(x_1,\ldots,
x_{n+2})=0\mbox{ if }\exists\,j\leq p+1:\,x_{j-1}=x_j\}\,.$$ 
Note that the condition is independent of the representative $c$ of the class $[c]$. 
This defines a finite decreasing
filtration $${\mathcal F}^0C_{\rel,\Lambda}^{n}(\gf,M)=C_{\rel,\Lambda}^{n}(\gf,M)\supset{\mathcal
F}^1C_{\rel,\Lambda}^{n}(\gf,M)\supset\cdots\supset{\mathcal F}^{n+1}C_{\rel,\Lambda}^{n}(\gf,M)=\{0\}\,,$$ 
whose associated spectral sequence converges thus in the strong (i.e., finite) sense to $H_{\rel,\Lambda}^n(\gf,M)$. 

Like in Section 2 of \cite{FW} (due to Pirashvili \cite{P} !), we have a product map
$$m_1:\Lambda^n\gf\otimes \gf\to \Lambda^{n+1}\gf,\,\,\,x_1\wedge\ldots\wedge x_n\otimes x\mapsto x_1\wedge\ldots\wedge x_n\wedge x.$$
The map $m_1$ induces a monomorphism
$$m^\bullet_1:\overline{C}^\bullet(\gf,\F)\hookrightarrow C^\bullet(\gf,\gf^*),$$
where $\overline{C}^\bullet(\gf,\F)$ is the truncated cochain complex
$$\overline{C}^0(\gf,\F):=0\,\mbox{ and }\,\overline{C}^n
(\gf,\F):=C^n(\gf,\F)\,\,\,\mbox{ for every integer}\,\,\,n\rangle0\,.$$
Accordingly, we have a cochain complex 
$$CR_{\Lambda}^\bullet(\gf):=\coker(m^\bullet_1)[-1],$$
the cokernel of $m^\bullet_1$ (up to a degree shift). We then have a long exact sequence in cohomology from the short exact sequence of complexes like in Proposition 2.1 in \cite{FW}. 

For this filtration/spectral sequence associated to $C_{\rel,\Lambda}^\bullet(\gf,M)$, the arguments of Section 2 in \cite{FW} go through word by word in order to show the following theorem (this is Theorem A of Pirashvili \cite{P}):

\begin{thm}\label{theorem_A}
Let $\gf$ be a Lie algebra, and let $M$ be a left $\gf$-module considered as a symmetric Leibniz
$\gf$-bimodule $M_s$. Then there is a spectral sequence converging to $H^\bullet_{\rel,\Lambda}(\gf,M)$ with second term 
$$E_2^{p,q}=HR^p_{\Lambda}(\gf)\otimes HL^{q}(\gf,M_s)\,.$$
\end{thm}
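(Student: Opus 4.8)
The plan is to mimic Pirashvili's proof of Theorem A as adapted in Section 2 of \cite{FW}, working with the filtration on $C_{\rel,\Lambda}^\bullet(\gf,M)$ introduced above. First I would compute the $E_0$-term. A representative of a class in $C_{\rel,\Lambda}^n(\gf,M)$ has $n+2$ arguments, and the associated graded piece $E_0^{p,q}={\mathcal F}^pC_{\rel,\Lambda}^{p+q}/{\mathcal F}^{p+1}C_{\rel,\Lambda}^{p+q}$ should be identified, via restricting to cochains that are alternating in the first $p+1$ arguments but not the next one (so that a genuine tensor factor survives there), with $\Hom(\Lambda^{p+1}\gf\otimes \gf,\Hom(\otimes^{q}\gf,M))$ modulo the image of $\Lambda^{p+2}\gf$; more precisely with $CR_\Lambda^p(\gf)\otimes CL^q(\gf,M_s)$ after a suitable bookkeeping of degree shifts, using that $CR_\Lambda^\bullet(\gf)=\coker(m_1^\bullet)[-1]$ records exactly the ``antisymmetric in first block, free tensor slot, modulo fully antisymmetric'' data. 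This is the step where the definitions of $I_n$, $J_n$ and the map $m_1$ all have to be lined up carefully.

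Next I would identify the differential $d_0$ on $E_0$. By the usual mechanism, $d_0$ is the part of the Chevalley-Eilenberg/Leibniz coboundary that does not decrease the filtration degree, i.e.\ the part acting only on the ``free'' tensor arguments (positions $p+2,\ldots,p+q+2$), leaving the antisymmetrized block untouched. This is precisely the Leibniz coboundary operator on $CL^q(\gf,M_s)$ with $M_s$ the symmetric bimodule, tensored with the identity on $CR_\Lambda^p(\gf)$; the Cartan-type relation and the triviality of the block action are what make this work (analogously to the role of Proposition \ref{prop_Cartan_relation} in the Hochschild-Serre case). Taking cohomology in the $q$-direction then yields
$$E_1^{p,q}=CR_\Lambda^p(\gf)\otimes HL^q(\gf,M_s),$$
since $CR_\Lambda^p(\gf)$ is a vector space and hence flat, so cohomology commutes with the tensor factor.

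Then I would identify $d_1$. On $E_1$ the surviving part of the coboundary is the one that raises the filtration degree by one, which acts on the antisymmetrized block, and I expect it to be exactly the differential of the complex $CR_\Lambda^\bullet(\gf)$ tensored with the identity on $HL^q(\gf,M_s)$ (the module structure being trivial because the Leibniz cohomology $HL^\bullet(\gf,M_s)$ carries only a trivial residual action, as in the Hochschild-Serre situation). Hence
$$E_2^{p,q}=HR_\Lambda^p(\gf)\otimes HL^q(\gf,M_s),$$
again using flatness of the tensor factor to pull the $HL^q$ out of the cohomology. Convergence to $H_{\rel,\Lambda}^\bullet(\gf,M)$ is immediate from the finiteness of the filtration noted in the excerpt. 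The main obstacle, as usual in these bar-type arguments, is the careful identification of the $E_0$ and $E_1$ pages: one must check that the restriction/antisymmetrization maps are well-defined on the relative (quotient) complex independently of the chosen representative, that the degree shifts $[-1]$ and $[-2]$ match up so that the product $m_1$ produces exactly the factor $CR_\Lambda^p$, and that no extra coboundary terms survive into $d_0$ or $d_1$ beyond the claimed ones — but since the excerpt explicitly says the arguments of Section 2 of \cite{FW} ``go through word by word,'' I would at this point simply invoke that reference for the routine verifications rather than reproduce them.
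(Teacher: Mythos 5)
Your proposal follows exactly the route the paper takes: the paper's entire "proof" of Theorem \ref{theorem_A} consists of setting up the filtration on $C_{\rel,\Lambda}^\bullet(\gf,M)$ and the complex $CR_\Lambda^\bullet(\gf)=\coker(m_1^\bullet)[-1]$, and then asserting that the arguments of Section 2 of \cite{FW} (i.e.\ Pirashvili's proof of Theorem A) go through word by word. Your sketch of the $E_0$-, $E_1$- and $d_1$-identifications is a correct outline of what that cited argument does, and deferring the routine verifications to \cite{FW} is precisely what the paper itself does.
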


\subsection{Comparison Lie- to commutative cohomology} 

In this subsection, $\gf$ is still a Lie algebra (i.e. $[x,x]=0$ for all $x\in\gf$). 
The next step is to alter the arguments in Section 2 in \cite{FW} to apply to the filtration/spectral sequence associated to $C_{\rel,\Lambda,S}^\bullet(\gf,M)$. 
We introduce a filtration as follows. In addition to
$$I_n=\langle x_1\otimes\ldots\otimes x_i\otimes\ldots\otimes x_j\otimes\ldots\otimes x_n\,|\,x_i=x_j\,\,\forall x_1,\ldots,x_n\in\gf, \forall 1\leq i<j\leq n\rangle_\F,$$
we introduce
$$I_{n,p}:=\langle x_1\otimes\ldots\otimes x_i\otimes\ldots\otimes x_j\otimes\ldots\otimes x_n\,|\,x_i=x_j\,\,\forall x_1,\ldots,x_n\in\gf, \forall 1\leq i<j\leq p\rangle_\F.$$
We have then 
$$J_n\subset(J_n+I_{n,p}),$$
and denoting
$$\Lambda^p\gf\vee S^{n-p}\gf:=\otimes^n\gf\,/\,(I_{n,p}+J_n),$$
we have 
$${\mathcal F}^pCS^n(\gf,M):=\Hom_\F(\Lambda^p\gf\vee S^{n-p}\gf,M)\subset CS^n(\gf,M)=\Hom_\F(S^n\gf,M).$$
The successive inclusions $(I_{n,p}+J_n)\subset(I_{n,p+1}+J_n)$ lead to a finite decreasing filtration
 $${\mathcal F}^1C_{\rel,\Lambda,S}^{n}(\gf,M)=C_{\rel,\Lambda,S}^{n}(\gf,M)\supset{\mathcal
F}^2C_{\rel,\Lambda}^{n}(\gf,M)\supset\cdots\supset{\mathcal F}^{n}C_{\rel,\Lambda,S}^{n}(\gf,M)=\{0\}\,,$$ 
whose associated spectral sequence converges thus in the strong (i.e., finite) sense to $H_{\rel,\Lambda,S}^n(\gf,M)$.

Again like in Section 2 of \cite{FW}, we have a product map
$$m_2:\Lambda^n\gf\vee \gf\to \Lambda^{n+1}\gf,\,\,\,x_1\wedge\ldots\wedge x_n\vee x\mapsto x_1\wedge\ldots\wedge x_n\wedge x.$$
Here $\Lambda^n\gf\vee \gf\subset S^{n+1}\gf$ is the quotient space of $\otimes^{n+1}\gf$ by the sum $I_{n+1,n}+J_{n+1}$. The map $m_2$ induces a monomorphism
$$m^\bullet_2:\overline{C}^\bullet(\gf,\F)\hookrightarrow C_{\Lambda,S}^\bullet(\gf,\gf^*).$$
Thus we have a cochain complex
$$CR_{\Lambda,S}^\bullet(\gf):=\coker(m^\bullet_2)[-1],$$
the cokernel of $m_2$ (up to a degree shift). We then have a long exact sequence in cohomology from the short exact sequence of complexes like in Proposition 2.1 in \cite{FW}. 

For this filtration/spectral sequence associated to $C_{\rel,\Lambda,S}^\bullet(\gf,M)$, the arguments of Section 2 in \cite{FW} still go through in order to show the following theorem:

\begin{thm}   \label{theorem_A1}
Let $\gf$ be a Lie algebra, and let $M$ be a left $\gf$-module considered as a symmetric Leibniz
$\gf$-bimodule $M_s$. Then there is a spectral sequence converging to $H^\bullet_{\rel,\Lambda,S}(\gf,M)$ with second term 
$$E_2^{p,q}=HR^p_{\Lambda,S}(\gf)\otimes HS^{q}(\gf,M)\,.$$
\end{thm}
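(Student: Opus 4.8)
\textbf{Proof plan for Theorem \ref{theorem_A1}.}
The plan is to follow the strategy of Pirashvili's proof of Theorem A (as re-exposed in Section 2 of \cite{FW}), adapting every step from the exterior/tensor dichotomy to the exterior/symmetric dichotomy. First I would identify the $E_0$-term of the filtration spectral sequence associated to ${\mathcal F}^\bullet C_{\rel,\Lambda,S}^\bullet(\gf,M)$. By construction, a cochain in ${\mathcal F}^pC_{\rel,\Lambda,S}^n(\gf,M)$ is one that is alternating in its first $p$ arguments and merely symmetric in the remaining $n-p$ arguments (modulo the relative subcomplex, i.e. modulo cochains that are alternating throughout); hence the associated graded $E_0^{p,q}$ should be identified with $\Hom_\F(\Lambda^p\gf\otimes S^q\gf,M)$ modulo the image of the map which makes the $(p{+}1)$-st argument also alternating. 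The key algebraic input is the map $m_2:\Lambda^n\gf\vee\gf\to\Lambda^{n+1}\gf$ introduced above, which at the level of the $p$-filtration direction plays exactly the role of the wedge-multiplication $m_1$ in Pirashvili's argument; the cokernel complex $CR_{\Lambda,S}^\bullet(\gf)$ is built precisely so that $E_1^{p,q}$ becomes $CR^p_{\Lambda,S}(\gf)\otimes CS^q(\gf,M)$ as a bicomplex.

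Next I would check that the differential $d_0$ (the part of the Chevalley--Eilenberg differential that does not change the filtration degree $p$) is, under the above identification, the commutative Chevalley--Eilenberg differential acting on the $S^q\gf$ factor with values in $M$: the terms of the differential \eqref{CE_coboundary} that keep a cochain alternating in its first $p$ slots are exactly the action and bracket terms involving only the last $q{+}1$ arguments, which is the definition of the differential computing $HS^q(\gf,M)$. This gives $E_1^{p,q}=CR^p_{\Lambda,S}(\gf)\otimes HS^q(\gf,M)$. Then $d_1$ is the differential of the complex $CR^\bullet_{\Lambda,S}(\gf)$ tensored with the identity (the terms of \eqref{CE_coboundary} that do move an argument out of the symmetric block into the alternating block, i.e. those governed by $m_2$), whence
$$E_2^{p,q}=HR^p_{\Lambda,S}(\gf)\otimes HS^q(\gf,M),$$
using that we are over a field so that the K\"unneth formula is just a tensor product. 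Convergence in the strong sense has already been recorded above, since the filtration is finite in each total degree.

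The main obstacle, as in \cite{P} and \cite{FW}, is the careful bookkeeping showing that $d_0$ is genuinely \emph{trivial in the $p$-direction} and reduces exactly to the commutative differential on the $S^\bullet$ factor --- in other words, verifying that no ``mixed'' terms of \eqref{CE_coboundary} survive at the $E_0$ level other than those accounted for by $m_2$. In characteristic $2$ one must be attentive that replacing $x=x_1+x_2$ turns alternativity into symmetry (the inclusion $J_n\subset I_{n,p}+J_n$), so the quotient $\Lambda^p\gf\vee S^{n-p}\gf$ is the correct object and the relations are not over-counted; this is where the argument genuinely differs from the classical characteristic-zero picture, but it is exactly the point already handled in Section 2 of \cite{FW}, and the present filtration is designed so that those verifications transcribe verbatim with $\otimes^\bullet$ replaced by $S^\bullet$ and $I_{n,p}$ playing its designated role. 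Granting that bookkeeping, the identification of $E_2$ is immediate.
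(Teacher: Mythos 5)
Your plan is correct and follows essentially the same route as the paper, which itself only records the filtration by $I_{n,p}+J_n$, the product map $m_2$ with cokernel complex $CR^\bullet_{\Lambda,S}(\gf)$, and then asserts that the arguments of Section 2 of \cite{FW} (Pirashvili's proof of Theorem A) go through to identify $E_1^{p,q}=CR^p_{\Lambda,S}(\gf)\otimes HS^q(\gf,M)$ and hence $E_2^{p,q}=HR^p_{\Lambda,S}(\gf)\otimes HS^q(\gf,M)$. Your write-up actually spells out slightly more of the $E_0/E_1/E_2$ bookkeeping than the paper does, and correctly isolates the one point where the adaptation is nontrivial (that the quotients $\Lambda^p\gf\vee S^{n-p}\gf$ are not over-collapsed by the interaction of $J_n$ with $I_{n,p}$ in characteristic $2$), which both you and the paper defer rather than verify in detail.
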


\subsection{Comparison commutative- to Leibniz cohomology} 

In this subsection, $\gf$ is now an arbitrary commutative Lie algebra (i.e. we have only 
$[x,y]=[y,x]$ for all $x,y\in\gf$). 
For the comparison between commutative cohomology and Leibniz cohomology, we reason 
as follows. The version of $J_n$ which concerns only the first $p$ tensor factors is 
\begin{align*}
J_{n,p}:=\langle x_1\otimes\ldots\otimes x_i\otimes x_{i+1}\otimes\ldots\otimes x_n+x_1\otimes\ldots\otimes x_{i+1}\otimes x_i\otimes\ldots\otimes x_n\,| \\
x_1,\ldots,x_n\in\gf\,\,\,{\rm and}\,\,\,i\leq p-1\rangle_\F.
\end{align*}

The sequence of inclusions
$$\{0\}\subset J_{n,2}\subset J_{n,3}\subset\ldots\subset J_{n,n}=J_n$$
induces a sequence of surjections
$$\otimes^n\gf\twoheadrightarrow S^2\gf\otimes \otimes^{n-2}\gf\twoheadrightarrow\ldots\twoheadrightarrow S^n\gf,$$
where we identify $S^p\gf\otimes \otimes^{n-p}\gf=\otimes^n\gf\,/\,J_{n,p}$, which induces  
in turn a sequence of monomorphisms
 $${\mathcal F}^nCL^{n}(\gf,M)=CS^{n}(\gf,M)\subset{\mathcal
F}^{n-1}CL^{n}(\gf,M)\subset\cdots\subset{\mathcal F}^{1}CL^{n}(\gf,M)=CL^n(\gf,M),$$
where by definition ${\mathcal F}^{p}CL^{n}(\gf,M):=\Hom_\F(S^p\gf\otimes \otimes^{n-p}\gf,M)$. We interpret this sequence of monomorphisms as a finite decreasing filtration of the quotient complex
$${\mathcal F}^1C_{\rel,S}^{n}(\gf,M)=C_{\rel,S}^{n}(\gf,M)\supset{\mathcal
F}^2C_{\rel,S}^{n}(\gf,M)\supset\cdots\supset{\mathcal F}^{n}C_{\rel,S}^{n}(\gf,M)=\{0\},$$
which arises from dividing the above sequence of monomorphisms by $CS^n(\gf,M)$.  
We need to show that this filtration is compatible with the Chevalley-Eilenberg differential,
i.e. 
$$d{\mathcal F}^{p}C_{\rel,S}^{n}(\gf,M)\subset{\mathcal F}^{p}C_{\rel,S}^{n+1}(\gf,M).$$ 
Indeed, given a cochain $f\in\Hom_\F(S^p\gf\otimes \otimes^{n-p}\gf,M)$, representing a class in ${\mathcal F}^{p}C_{\rel,S}^{n}(\gf,M)$, we have to show that $df$, given by Equation (\ref{CE_coboundary}), is still symmetric in the first $p$ entries. But this is clear from the symmetry of the bracket of $\gf$ together with the symmetry of the terms of (\ref{CE_coboundary}). In conclusion, we have a spectral sequence converging finitely to  $H^\bullet_{\rel,S}(\gf,M)$. 

Again like in Section 2 of \cite{FW}, we have a product map
$$m_3:S^n\gf\otimes \gf\to S^{n+1}\gf,\,\,\,x_1\vee\ldots\vee x_n\otimes x\mapsto x_1\vee\ldots\vee x_n\vee x,$$

The map $m_3$ induces a monomorphism
$$m^\bullet_3:\overline{CS}^\bullet(\gf,\F)\hookrightarrow CS^\bullet(\gf,\gf^*).$$
Accordingly, we have a cochain complex 
$$CR_{S}^\bullet(\gf):=\coker(m^\bullet_3)[-1]$$
the cokernel of $m^\bullet_3$ (up to a degree shift). We then have a long exact sequence in cohomology from the short exact sequence of complexes like in Proposition 2.1 in \cite{FW}. 

Now the reasoning of Section 2 in \cite{FW} goes through (mutatis mutandis) in order to show the following theorem. 

\begin{thm}  \label{theorem_A2}
Let $\gf$ be a commutative Lie algebra, and let $M$ be a left $\gf$-module considered as a symmetric Leibniz
$\gf$-bimodule $M_s$. Then there is a spectral sequence converging to $H^\bullet_{\rel,S}(\gf,M)$ with second term 
$$E_2^{p,q}=HR^p_{S}(\gf)\otimes HL^{q}(\gf,M)\,.$$
\end{thm}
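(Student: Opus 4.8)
The plan is to transcribe, essentially word for word, the proof of Theorem~\ref{theorem_A} (Pirashvili's Theorem~A in \cite{P}, carried out in detail in Section~2 of \cite{FW}), replacing the exterior powers $\Lambda^\bullet\gf$ by the symmetric powers $S^\bullet\gf$ and the product map $m_1$ by $m_3$, and checking that the handful of steps in which antisymmetry was used survive the passage to the merely symmetric situation of characteristic~$2$. Note first that nothing here requires $\gf$ to be a genuine Lie algebra: the complexes $CS^\bullet(\gf,M)$ and $CL^\bullet(\gf,M)$, the inclusion $i_3$, the product map $m_3$ and the filtration on $C^\bullet_{\rel,S}(\gf,M)$ all make sense for an arbitrary commutative Lie algebra, which is why this comparison spectral sequence has a wider domain of validity than the previous two. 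We start from the short exact sequence of complexes $0\to CS^\bullet(\gf,M)\to CL^\bullet(\gf,M)\to C^\bullet_{\rel,S}(\gf,M)[2]\to 0$ defining $C^\bullet_{\rel,S}$ and from the finite decreasing filtration recalled above; since that filtration has already been shown compatible with the Chevalley-Eilenberg differential~(\ref{CE_coboundary}), its spectral sequence converges strongly to $H^\bullet_{\rel,S}(\gf,M)$ and it remains only to identify $E_2$.

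First I would compute the $E_0$-term together with $d_0$. Using $\mathcal F^{p}CL^{n}(\gf,M)=\Hom_\F(S^p\gf\otimes\otimes^{n-p}\gf,M)$ and the adjunction $\Hom(A\otimes B,M)\cong\Hom(A,\Hom(B,M))$, the associated graded space $E_0^{p,q}=\mathcal F^{p}C^{p+q}_{\rel,S}(\gf,M)/\mathcal F^{p+1}C^{p+q}_{\rel,S}(\gf,M)$ factors --- up to the degree shifts --- as a tensor product of a ``head'' contribution, which is a term of the complex $CR^\bullet_S(\gf)=\coker(m^\bullet_3)[-1]$, with a ``tail'' contribution, which is a term of the Leibniz cochain complex $CL^\bullet(\gf,M)$. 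The head yields the \emph{cokernel} $\coker(m^\bullet_3)$, rather than the full symmetric part, precisely because $C^\bullet_{\rel,S}$ is the cokernel of $i_3$, i.e.\ because we have divided $CL^\bullet$ by $\im(i_3)$. Under this identification $d_0$ corresponds to $\id\otimes d_{CL}$, with $d_{CL}$ the Leibniz coboundary operator, because $d_0$ is exactly the part of~(\ref{CE_coboundary}) that acts within the non-symmetrized tensor tail and leaves the symmetric head untouched. Taking $d_0$-cohomology therefore gives $E_1^{p,q}\cong CR^p_S(\gf)\otimes HL^q(\gf,M_s)$.

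Next I would identify $d_1$. The terms of~(\ref{CE_coboundary}) not already consumed by $d_0$ are those that bracket or symmetrize two of the head arguments, and under the identification above they assemble into the coboundary operator of $CR^\bullet_S(\gf)$; hence $d_1=d_{CR_S}\otimes\id$ and
$$E_2^{p,q}=HR^p_S(\gf)\otimes HL^q(\gf,M_s),$$
as claimed. (The long exact sequences attached to the short exact sequences $0\to CS^\bullet(\gf,M)\to CL^\bullet(\gf,M)\to C^\bullet_{\rel,S}(\gf,M)[2]\to 0$ and $0\to\overline{CS}^\bullet(\gf,\F)\to CS^\bullet(\gf,\gf^*)\to CR^\bullet_S(\gf)[1]\to 0$ --- the analogues of Propositions~2.2 and~2.1 of \cite{FW}, respectively --- are not needed for the construction itself, only for extracting the vanishing consequences used later in the paper.)

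I expect the identification of $E_0$ and $d_0$ to be the only genuinely non-routine point, since it is there that the replacement $\Lambda^\bullet\rightsquigarrow S^\bullet$ is not purely formal. What has to be checked is that exchanging the antisymmetrizing relations $I_{n,p}$ for the symmetrizing relations $J_{n,p}$ does not spoil the tensor splitting: the bracket terms of~(\ref{CE_coboundary}) that pair a head argument with a tail argument, together with the relations defining $S^\bullet\gf$, must remain compatible with the factorization into a head factor and a $CL^\bullet(\gf,M)$ factor. This rests on the symmetry of the bracket of $\gf$ --- which is what made the $J_{n,p}$ stable under $d$, as verified before the statement --- and on the fact that $m_3$, which symmetrizes the last two tensor slots, is a well-defined monomorphism $\overline{CS}^\bullet(\gf,\F)\hookrightarrow CS^\bullet(\gf,\gf^*)$, just as $m_1$ was in the Lie case. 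Once these points are settled, every remaining verification is identical, mutatis mutandis, to the corresponding one in Section~2 of \cite{FW}.
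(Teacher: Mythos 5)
Your proposal takes essentially the same route as the paper: the paper's own proof of Theorem \ref{theorem_A2} consists precisely of the filtration by the subspaces $J_{n,p}$, the compatibility check with the differential (\ref{CE_coboundary}) via the symmetry of the bracket, the introduction of $m_3$ and $CR_S^\bullet(\gf)$, and then the assertion that the arguments of Section 2 of \cite{FW} (Pirashvili's proof of Theorem A) go through mutatis mutandis with $\Lambda^\bullet$ replaced by $S^\bullet$ and $m_1$ by $m_3$. Your explicit identifications of $E_0$, $d_0$, $E_1$ and $d_1$, and your observation that the argument never uses $[x,x]=0$ and hence applies to arbitrary commutative Lie algebras, are consistent with that argument and in fact supply more detail than the paper itself records.
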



\section{Applications of the comparison spectral sequences}

By general homological algebra, the spectral sequences of Theorems  \ref{theorem_A}, \ref{theorem_A1}, \ref{theorem_A2} imply (using the long exact sequences arising from the definitions of $H^\bullet_{\rel,\Lambda}(\gf,M)$, $H^\bullet_{\rel,\Lambda,S}(\gf,M)$ and $H^\bullet_{\rel,S}(\gf,M)$ resp.) lead (exactly as in the proof of Theorem 2.6 in \cite{FW}) to the following three theorems. 

\begin{thm}\label{vanlie}
Let $\gf$ be a Lie algebra, let $M$ be a left $\gf$-module considered as a symmetric Leibniz
$\gf$-bimodule $M_s$, and let $n$ be a non-negative integer. If $H^k(\gf,M)=0$ for every
integer $k$ with $0\le k\le n$, then $HL^k(\gf,M_s)=0$ for every integer $k$ with $0\le k\le
n$ and $HL^{n+1}(\gf,M_s)\cong H^{n+1}(\gf,M)$ as well as $HL^{n+2}(\gf,M_s)\cong
H^{n+2}(\gf,M)$. In particular, $H^\bullet(\gf,M)=0$ implies that $HL^\bullet(\gf,M_s)
=0$.
\end{thm}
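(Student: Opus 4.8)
I would deduce the theorem by playing the comparison spectral sequence of Theorem~\ref{theorem_A} off against the long exact cohomology sequence of the defining short exact sequence of complexes
\[
0 \to C^\bullet(\gf,M) \xrightarrow{i_1} CL^\bullet(\gf,M) \to \coker(i_1) \to 0 .
\]
Since $C_{\rel,\Lambda}^\bullet(\gf,M) = \coker(i_1)[-2]$, this long exact sequence reads
\[
\cdots \to H^{k-3}_{\rel,\Lambda}(\gf,M) \to H^k(\gf,M) \xrightarrow{i_1^*} HL^k(\gf,M_s) \to H^{k-2}_{\rel,\Lambda}(\gf,M) \to H^{k+1}(\gf,M) \to \cdots ,
\]
so the whole statement will drop out once one controls the relative groups $H^j_{\rel,\Lambda}(\gf,M)$ in a range of low degrees, and that control is supplied by the $E_2$-page $E_2^{p,q} = HR^p_\Lambda(\gf)\otimes HL^q(\gf,M_s)$ of the spectral sequence of Theorem~\ref{theorem_A}, which converges to $H^\bullet_{\rel,\Lambda}(\gf,M)$.

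I would run the argument as an induction on $n$, establishing the three assertions simultaneously. In the base case $n=0$ one uses that the degree-$0$ spaces of $C^\bullet(\gf,M)$ and $CL^\bullet(\gf,M)$ and their first differentials literally coincide, whence $HL^0(\gf,M_s)=H^0(\gf,M)=0$; the isomorphisms in degrees $1$ and $2$ then follow from the long exact sequence, using $H^{-2}_{\rel,\Lambda}(\gf,M)=H^{-1}_{\rel,\Lambda}(\gf,M)=0$ for degree reasons and $H^0_{\rel,\Lambda}(\gf,M)=0$, the latter because the only term of the spectral sequence of total degree $0$ is $E_2^{0,0}=HR^0_\Lambda(\gf)\otimes HL^0(\gf,M_s)=0$.

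For the inductive step I would assume the statement for $n-1$ and apply it: since $H^k(\gf,M)=0$ for $k\le n$ holds a fortiori for $k\le n-1$, the inductive hypothesis already yields $HL^k(\gf,M_s)=0$ for $k\le n-1$, the isomorphism $HL^n(\gf,M_s)\cong H^n(\gf,M)=0$ (so that $HL^k(\gf,M_s)=0$ for all $k\le n$), and the isomorphism $HL^{n+1}(\gf,M_s)\cong H^{n+1}(\gf,M)$. Only the isomorphism $HL^{n+2}(\gf,M_s)\cong H^{n+2}(\gf,M)$ remains, and by the long exact sequence it holds as soon as $H^{n-1}_{\rel,\Lambda}(\gf,M)=0$ and $H^n_{\rel,\Lambda}(\gf,M)=0$. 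Here the spectral sequence finishes the job: for every $j\le n$ each term $E_2^{p,q}=HR^p_\Lambda(\gf)\otimes HL^q(\gf,M_s)$ with $p+q=j$ has $q\le j\le n$, hence $HL^q(\gf,M_s)=0$ by what has just been shown, so $E_2^{p,q}=0$, and therefore $H^j_{\rel,\Lambda}(\gf,M)=0$ for all $j\le n$. The ``in particular'' assertion then follows by letting $n\to\infty$.

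The whole proof is formal homological algebra --- precisely the mechanism used for Theorem~2.6 of \cite{FW} (ultimately Pirashvili's Theorem~B, \cite{P}), transported verbatim along the monomorphism $i_1$. I thus expect no genuine obstacle, only bookkeeping: keeping the shift $[-2]$ and the indexing of the long exact sequence consistent, and --- the one slightly delicate point --- arranging the induction so that the relative cohomology groups fed back through the spectral sequence always sit in degrees where the vanishing of $HL^\bullet(\gf,M_s)$ is already available, rather than being assumed circularly. In particular, no structural information about $HR^\bullet_\Lambda(\gf)$ is needed beyond the shape of its $E_2$-page.
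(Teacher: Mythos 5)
Your proposal is correct and takes essentially the same route as the paper: the paper proves Theorem \ref{vanlie} simply by invoking Theorem 2.6 of \cite{FW} (ultimately Pirashvili), whose mechanism is exactly the interplay you describe between the long exact sequence of the inclusion $i_1$ and the spectral sequence of Theorem \ref{theorem_A}, run as an induction on $n$. Your handling of the degree shift $[-2]$ and your care to avoid circularity in feeding the vanishing of $HL^q(\gf,M_s)$ back into the $E_2$-page are both sound, so nothing is missing.
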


This is exactly Theorem 2.6 in \cite{FW}. It has a converse stated in the Remark after its proof in \cite{FW}. 

\begin{thm}\label{vanlie1}
Let $\gf$ be a Lie algebra, let $M$ be a left $\gf$-module considered as a symmetric Leibniz
$\gf$-bimodule $M_s$, and let $n$ be a non-negative integer. If $H^k(\gf,M)=0$ for every
integer $k$ with $0\le k\le n$, then $HS^k(\gf,M_s)=0$ for every integer $k$ with $0\le k\le
n$ and $HS^{n+1}(\gf,M_s)\cong H^{n+1}(\gf,M)$ as well as $HS^{n+2}(\gf,M_s)\cong
H^{n+2}(\gf,M)$. In particular, $H^\bullet(\gf,M)=0$ implies that $HS^\bullet(\gf,M_s)
=0$.
\end{thm}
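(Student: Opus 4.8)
The plan is to mimic verbatim the proof of Theorem \ref{vanlie} (that is, Theorem 2.6 of \cite{FW}), replacing the comparison spectral sequence of Pirashvili (Theorem \ref{theorem_A}) with the one from Theorem \ref{theorem_A1}, and using the long exact sequence attached to the short exact sequence of complexes $0\to C^\bullet(\gf,M)\xrightarrow{i_2} CS^\bullet(\gf,M)\to C_{\rel,\Lambda,S}^\bullet(\gf,M)[2]\to 0$ in place of the one attached to $i_1$. The only structural difference between the two situations is that one compares $H^\bullet$ with $HS^\bullet$ via $H_{\rel,\Lambda,S}^\bullet$ rather than $H^\bullet$ with $HL^\bullet$ via $H_{\rel,\Lambda}^\bullet$; since both relative theories are governed by a first-quadrant spectral sequence whose $E_2$-page is a tensor product with an $HR$-type factor that vanishes in degree $0$, the bookkeeping is identical.

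First I would record the key input: in Theorem \ref{theorem_A1} the factor $HR^p_{\Lambda,S}(\gf)$ vanishes for $p=0$, because $CR_{\Lambda,S}^\bullet(\gf)=\coker(m_2^\bullet)[-1]$ is built from the truncated complex $\overline{C}^\bullet(\gf,\F)$ which is zero in degree $0$; hence the degree shift forces $HR^0_{\Lambda,S}(\gf)=0$. Consequently, on the $E_2$-page $E_2^{p,q}=HR^p_{\Lambda,S}(\gf)\otimes HS^q(\gf,M)$ the entire row $p=0$ is zero, so $H^n_{\rel,\Lambda,S}(\gf,M)$ is assembled only from $E_\infty^{p,q}$ with $p\geq 1$, i.e. from $E_2^{p,q}$ with $p\geq 1$ and $p+q=n$. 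Under the hypothesis $H^k(\gf,M)=0$ for $0\leq k\leq n$, the long exact sequence attached to $i_2$ reads in low degrees as an isomorphism $HS^k(\gf,M)\cong H^{k-2}_{\rel,\Lambda,S}(\gf,M)$ for small $k$ (once the $H^\bullet$-terms drop out), so one must run an induction: control $H_{\rel,\Lambda,S}$ in low degrees, feed that back through the long exact sequence to control $HS$, feed that back into the $E_2$-page to improve the control of $H_{\rel,\Lambda,S}$, and so on.

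More concretely, I would argue by induction on $n$. Suppose $H^k(\gf,M)=0$ for $0\leq k\leq n$. The long exact sequence from $i_2$ gives $HS^0(\gf,M)\cong H^0(\gf,M)=0$ and then $HS^1(\gf,M)\cong H^1(\gf,M)=0$ (here the shift by $2$ means the relative term has not yet entered), and for $2\leq k\leq n+2$ it gives $HS^k(\gf,M_s)\cong H^{k-2}_{\rel,\Lambda,S}(\gf,M)$ together with the tail pieces relating $HS^{n+1},HS^{n+2}$ to $H^{n+1},H^{n+2}$ once $H^{n+1}_{\rel,\Lambda,S}$ and $H^{n+2}_{\rel,\Lambda,S}$ are shown to vanish. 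So it remains to prove $H^m_{\rel,\Lambda,S}(\gf,M)=0$ for $0\leq m\leq n$, which I do by an interlocked induction on $m$ using the spectral sequence: $E_2^{p,q}=HR^p_{\Lambda,S}(\gf)\otimes HS^q(\gf,M)$ vanishes whenever $p=0$ (shown above) or whenever $0\leq q\leq n$ and $p\leq m$ (by the already-established $HS^q=0$ in that range, which in turn rests on $H^{q-2}_{\rel,\Lambda,S}=0$ from the previous inductive step), so every $E_2^{p,q}$ with $p+q=m$ is zero and hence $H^m_{\rel,\Lambda,S}(\gf,M)=0$. Chasing the long exact sequence once more then yields $HL$-type… rather $HS$-type isomorphisms $HS^{n+1}(\gf,M_s)\cong H^{n+1}(\gf,M)$ and $HS^{n+2}(\gf,M_s)\cong H^{n+2}(\gf,M)$, and the final assertion $H^\bullet(\gf,M)=0\Rightarrow HS^\bullet(\gf,M_s)=0$ follows by letting $n\to\infty$.

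I expect the only genuine obstacle to be setting up the double induction cleanly so that the degree shift by $2$ and the vanishing of the $p=0$ row interlock correctly — exactly the point where \cite{FW} proves its Theorem 2.6 — and, more bureaucratically, checking that $HR^0_{\Lambda,S}(\gf)=0$, i.e. that the truncation and degree shift in the definition of $CR_{\Lambda,S}^\bullet(\gf)$ behave as in the $\Lambda$-case. Everything else is formal: the long exact sequences exist by construction of the relative complexes, the spectral sequence converges strongly since the filtration is finite, and the algebra of the induction is identical to that in \cite{FW}. Hence the proof reduces to the single sentence that the argument of Theorem 2.6 in \cite{FW} applies verbatim, with Theorem \ref{theorem_A1} and the long exact sequence of $i_2$ in the roles of Theorem A of \cite{P} and the long exact sequence of $i_1$.
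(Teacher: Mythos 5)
Your proposal follows the paper's own (one-line) proof exactly: the paper obtains Theorem \ref{vanlie1} by rerunning the argument of Theorem 2.6 of \cite{FW} with the spectral sequence of Theorem \ref{theorem_A1} and the long exact sequence attached to $i_2$ in place of Pirashvili's spectral sequence and the sequence attached to $i_1$, which is precisely the interlocked induction you describe. One caveat: your justification of $HR^0_{\Lambda,S}(\gf)=0$ is backwards (the cokernel of a map \emph{out of} a complex that vanishes in degree $0$ is the full target in that degree, not zero), but this claim is not actually needed, since in your double induction the entries $E_2^{0,m}$ are already killed by the previously established vanishing $HS^m(\gf,M)=0$, exactly as in \cite{FW}.
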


And for the more general case of commutative Lie algebras:

\begin{thm}\label{vanlie2}
Let $\gf$ be a commutative Lie algebra, let $M$ be a left $\gf$-module considered as a symmetric Leibniz
$\gf$-bimodule $M_s$, and let $n$ be a non-negative integer. If $HS^k(\gf,M)=0$ for every
integer $k$ with $0\le k\le n$, then $HL^k(\gf,M_s)=0$ for every integer $k$ with $0\le k\le
n$ and $HL^{n+1}(\gf,M_s)\cong HS^{n+1}(\gf,M)$ as well as $HL^{n+2}(\gf,M_s)\cong
HS^{n+2}(\gf,M)$. In particular, $HS^\bullet(\gf,M)=0$ implies that $HL^\bullet(\gf,M_s)
=0$.
\end{thm}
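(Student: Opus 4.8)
The proof will follow the pattern of the proof of Theorem 2.6 in \cite{FW}, with the commutative complex $CS^\bullet$ in the role of the Chevalley--Eilenberg complex and $HS^\bullet$ in the role played there by $H^\bullet$. Two exact tools are available. First, the short exact sequence of complexes $0\to CS^\bullet(\gf,M)\xrightarrow{i_3}CL^\bullet(\gf,M_s)\to\coker(i_3)\to 0$ together with the identification $C^\bullet_{\rel,S}(\gf,M)=\coker(i_3)[-2]$ yields the long exact sequence
$$\cdots\to HS^k(\gf,M)\to HL^k(\gf,M_s)\to H^{k-2}_{\rel,S}(\gf,M)\to HS^{k+1}(\gf,M)\to\cdots.$$
Second, Theorem \ref{theorem_A2} provides the first-quadrant, finitely convergent spectral sequence $E_2^{p,q}=HR^p_S(\gf)\otimes HL^q(\gf,M_s)\Rightarrow H^{p+q}_{\rel,S}(\gf,M)$. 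The plan is to prove the statement by induction on $n$: at each stage the induction hypothesis supplies a vanishing range for $HL^\bullet$, this vanishing kills the relevant $E_2$-terms and hence forces a vanishing range for $H^\bullet_{\rel,S}$, and the long exact sequence converts this back into the desired assertions about $HL^\bullet$ in the next two degrees.

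Before the induction I would record two elementary facts. Since $S^1\gf=\otimes^1\gf$ and $CS^0=CL^0=M$, the inclusion $i_3$ is an isomorphism in degrees $0$ and $1$, so $\coker(i_3)$ is concentrated in degrees $\geq 2$; hence $H^k_{\rel,S}(\gf,M)=0$ for all $k\leq -1$. Feeding this into the long exact sequence gives $HL^0(\gf,M_s)\cong HS^0(\gf,M)$ and $HL^1(\gf,M_s)\cong HS^1(\gf,M)$ unconditionally; this is the ``$n=-1$'' instance of the statement and launches the induction. (The case $n=0$ is then immediate: $HS^0=0$ forces $HL^0=0$ and $HL^1\cong HS^1$ by the above, while $HL^2\cong HS^2$ follows once $H^0_{\rel,S}(\gf,M)=0$, which holds because in total degree $0$ the only contribution is a subquotient of $E_2^{0,0}=HR^0_S(\gf)\otimes HL^0(\gf,M_s)=0$.)

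For the inductive step, assume the statement for $n-1$ and suppose $HS^k(\gf,M)=0$ for $0\leq k\leq n$. Applying the hypothesis for $n-1$ gives $HL^k(\gf,M_s)=0$ for $0\leq k\leq n-1$, together with $HL^n(\gf,M_s)\cong HS^n(\gf,M)=0$ and $HL^{n+1}(\gf,M_s)\cong HS^{n+1}(\gf,M)$. So already $HL^k(\gf,M_s)=0$ for $0\leq k\leq n$ and $HL^{n+1}\cong HS^{n+1}$, and only $HL^{n+2}(\gf,M_s)\cong HS^{n+2}(\gf,M)$ remains to be shown. For this, observe that on each antidiagonal $p+q=k$ with $0\leq k\leq n$ one has $0\leq q\leq k\leq n$, hence $HL^q(\gf,M_s)=0$ and $E_2^{p,q}=0$; therefore $H^k_{\rel,S}(\gf,M)=0$ for $0\leq k\leq n$. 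In particular $H^{n-1}_{\rel,S}(\gf,M)=H^n_{\rel,S}(\gf,M)=0$, and the segment
$$H^{n-1}_{\rel,S}(\gf,M)\to HS^{n+2}(\gf,M)\to HL^{n+2}(\gf,M_s)\to H^n_{\rel,S}(\gf,M)$$
of the long exact sequence gives $HL^{n+2}(\gf,M_s)\cong HS^{n+2}(\gf,M)$. This completes the induction, and the final assertion follows by letting $n\to\infty$.

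The only genuine subtlety, and the step to be handled with care, is the apparent circularity: the spectral sequence computing $H^\bullet_{\rel,S}$ is built out of the very groups $HL^\bullet$ one wishes to determine. The induction on $n$ is exactly what breaks this loop, since each step invokes $HL^q$ only in degrees $q$ strictly below the degree being resolved; this is why the argument must proceed degree by degree rather than being applied to $HS^\bullet$ wholesale. I expect nothing else to be delicate. In particular, in contrast with Pirashvili's original use of the spectral sequence, the groups $HR^\bullet_S(\gf)$ never need to be computed: they occur only tensored with a group $HL^q(\gf,M_s)$ that has already been shown to vanish. The bookkeeping with the degree shift by $2$ in $C^\bullet_{\rel,S}$, the identification $H^k_{\rel,S}(\gf,M)=H^{k+2}(\coker(i_3))$, and the finiteness of the filtrations needed for convergence are routine and proceed exactly as in Sections~5--6 and in \cite{FW}.
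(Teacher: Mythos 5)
Your proposal is correct and is essentially the paper's own argument: the paper proves Theorem \ref{vanlie2} by asserting that the proof of Theorem 2.6 in \cite{FW} carries over verbatim, using precisely the two tools you identify (the long exact sequence coming from $i_3$ with the shift $H^k_{\rel,S}=H^{k+2}(\coker(i_3))$, and the spectral sequence of Theorem \ref{theorem_A2} whose $E_2^{p,q}$ vanishes whenever $HL^q$ does). Your degree-by-degree induction, including the unconditional isomorphisms in degrees $0$ and $1$ and the observation that $HR^\bullet_S(\gf)$ never needs to be computed, is a faithful and correct reconstruction of that argument.
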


The analogous converse statements (like in the Remark after Theorem 2.6 in \cite{FW}) are 
obviously also true. Indeed, suppose for example that $HL^\bullet(\gf,M_s)=0$. Then by the long exact sequence $H^{p-2}_{\rel,S}(\gf,M)=HS^p(\gf,M)$  for all $p\geq 2$ (while $HS^0(\gf,M)=HS^1(\gf,M)=0$). On the other hand, the spectral sequence in Theorem  \ref{theorem_A2} shows that the $E_2$-term, and thus the relative cohomology $H^{p}_{\rel,S}(\gf,M)$ is zero for all $p\geq 0$. Therefore all cohomology $HS^\bullet(\gf,M)=0$. 

One may also formulate conditions relying on a vanishing of the tensor factor $HR_{\Lambda}^k(\gf,M)$ for $0\leq k\leq n$ in order to obtain from the spectral sequence (together with the long exact sequence linking $H^\bullet(\gf,M)$ and $HL^\bullet(\gf,M)$) the isomorphy between $H^\bullet(\gf,M)$ and $HL^\bullet(\gf,M)$. 
For example, the isomorphy of $H^2(\gf,\F)$ and $HL^1(\gf,\gf)$ on the one hand and $H^3(\gf,\F)$ and $HL^2(\gf,\gf)$ on the other hand imply the vanishing of $HR_{\Lambda}^0(\gf)$ which then can be reported into the spectral sequence of Theorem \ref{theorem_A}
to yield $H_{\rel,\Lambda}^0(\gf,M)=0$ and thus (by the long exact sequence) $H^2(\gf,M)\cong HL^2(\gf,M)$. Obviously, this can be done with a vanishing of the first $n$ spaces  
$HR_{\Lambda}^k(\gf)$. Similar methods can be applied to the situations in Theorems \ref{theorem_A1} and \ref{theorem_A2}, too. We refrain form stating all the consequences as theorems. 

As a concrete computation, we may consider similarly to Section \ref{section_applications_HS}a Lie algebra $\gf$ which has a $1$-dimensional ideal $\hf$, and consider cohomology with values in the $1$-dimensional module $\F_1$. By the above theorems, 
we obtain directly with Theorem \ref{cohomology_vanishing}:

\begin{thm} \label{vanishing_all_cohomologies}
Let $\gf$ be a Lie algebra which has a $1$-dimensional ideal $\hf$. Suppose that 
$\hf$ acts non-trivially on the module $\F_1$. Then 
$$HS^\bullet(\gf,\F_1)=HL^\bullet(\gf,\F_1)=H^\bullet(\gf,\F_1)=0.$$ 
\end{thm}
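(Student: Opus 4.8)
The plan is to chain together the results already established. First I would invoke Theorem~\ref{cohomology_vanishing}: since $\gf$ is in particular a commutative Lie algebra with a $1$-dimensional ideal $\hf$ acting non-trivially on $\F_1$, we get $HS^\bullet(\gf,\F_1)=0$ immediately. This disposes of the commutative cohomology and provides the hypothesis needed for the comparison theorems.

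Next I would feed this vanishing into Theorem~\ref{vanlie2} (the comparison from commutative to Leibniz cohomology), applied with $M=\F_1$ viewed as a symmetric bimodule $M_s=(\F_1)_s$. Since $HS^k(\gf,\F_1)=0$ for all $k$, the ``in particular'' clause of that theorem yields $HL^\bullet(\gf,(\F_1)_s)=0$. One should note here that $\F_1$, being a one-dimensional module, is automatically symmetric as a bimodule, so $M_s=\F_1$ and there is no discrepancy between the module and bimodule notation. Then, to get Chevalley-Eilenberg cohomology, I would apply Theorem~\ref{vanlie1} (comparison from Lie to commutative cohomology): here $\gf$ is genuinely a Lie algebra by hypothesis, and the contrapositive-type reasoning spelled out in the paragraph after Theorem~\ref{vanlie2} applies. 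Namely, from $HL^\bullet(\gf,\F_1)=0$ and the long exact sequence attached to $C_{\rel,\Lambda,S}^\bullet$ one gets $H^{p-2}_{\rel,\Lambda,S}(\gf,\F_1)\cong H^p(\gf,\F_1)$ for $p\geq 2$ — wait, more carefully: I would instead use $HS^\bullet(\gf,\F_1)=0$ together with the long exact sequence for $i_2$, giving $H^{p-2}_{\rel,\Lambda,S}(\gf,\F_1)\cong H^p(\gf,\F_1)$ for $p\geq 2$ and $H^0(\gf,\F_1)=H^1(\gf,\F_1)=0$, and then observe that the spectral sequence of Theorem~\ref{theorem_A1} forces $H^\bullet_{\rel,\Lambda,S}(\gf,\F_1)=0$ because its $E_2$-term $HR^p_{\Lambda,S}(\gf)\otimes HS^q(\gf,\F_1)$ vanishes identically (the second tensor factor is zero). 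Hence $H^\bullet(\gf,\F_1)=0$ as well.

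An alternative, slightly cleaner route for the last step: once $HS^\bullet(\gf,\F_1)=0$ is known, apply the converse direction of Theorem~\ref{vanlie1} exactly as the converse of Theorem~\ref{vanlie2} was argued in the text — the long exact sequence of $i_2$ identifies $H^p(\gf,\F_1)$ with the relative cohomology $H^{p-2}_{\rel,\Lambda,S}(\gf,\F_1)$ (for $p\ge 2$, with the low-degree terms vanishing directly), and Theorem~\ref{theorem_A1} kills that relative cohomology. I would present whichever of these is shortest; they are logically the same argument.

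The only genuine subtlety — not really an obstacle — is bookkeeping: making sure the hypotheses of each cited theorem are met ($\gf$ must be a Lie algebra for Theorems~\ref{vanlie1} and \ref{theorem_A1}, which is fine since that is assumed here, whereas Theorem~\ref{vanlie2} and \ref{cohomology_vanishing} only need the weaker commutative hypothesis), and that the module is the symmetric bimodule throughout, which is automatic for $\F_1$. No new computation is required; the proof is a three-line concatenation of Theorem~\ref{cohomology_vanishing}, Theorem~\ref{vanlie2}, and the converse form of Theorem~\ref{vanlie1}.
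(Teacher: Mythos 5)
Your proposal is correct and follows exactly the route the paper intends: Theorem~\ref{cohomology_vanishing} gives $HS^\bullet(\gf,\F_1)=0$, Theorem~\ref{vanlie2} then kills $HL^\bullet(\gf,\F_1)$, and the converse-style argument via the long exact sequence for $i_2$ together with the vanishing of the $E_2$-term in Theorem~\ref{theorem_A1} kills $H^\bullet(\gf,\F_1)$. The paper itself only says ``by the above theorems, we obtain directly with Theorem~\ref{cohomology_vanishing}'', so your write-up is in fact a more careful version of the same concatenation.
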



\vspace{.5cm}
\noindent {\bf Acknowledgments.} 
The author thanks the organisors of the Porto conference on non-associative algebras in 2019 where this work was initiated, for inviting him. He thanks furthermore Pasha Zusmanovich for discussion and for sending him a copy of \cite{LZ}.



\end{document}